\documentclass{amsart}

\newtheorem{lem}{Lemma}[section] 
\newtheorem{prp}[lem]{Proposition}
\newtheorem{thm}[lem]{Theorem} 
\newtheorem{crl}[lem]{Corollary} 
\newtheorem{defi}[lem]{Definition} 
\newtheorem{nota}[lem]{Notation}

\newtheorem{rque}[lem]{Remark}
\newtheorem{quest}[lem]{Question}

\numberwithin{equation}{section}

\newcommand\R{\mathbb{R}}

\newcommand\N{\mathbb{N}}
\newcommand\Z{\mathbb{Z}}

\newcommand\G{\mathbb{G}}

\usepackage{amsmath,amssymb,amsthm}
\usepackage{mathrsfs}
\usepackage{graphicx}
\usepackage{amsmath}
\usepackage[all]{xy}

\begin{document}

\title{Haagerup Approximation Property for quantum reflection groups}

\author{Fran\c cois Lemeux}
\address{Fran\c cois Lemeux, Laboratoire de math\'ematiques de Besan\c con, UFR Sciences et Techniques, Universit\' e de Franche-Comt\' e,16 route de Gray, 25000 Besan\c con, France}
\curraddr{}
\email{francois.lemeux@univ-fcomte.fr}
\thanks{}

\subjclass[2010]{Primary : 46L54, 16T20. Secondary : 46L65, 20G42}

\date{}

\dedicatory{}

\commby{Marius Junge}

\begin{abstract}
In this paper we prove that the duals of the quantum reflection groups $H_N^{s+}$ have the Haagerup property for all $N\ge4$ and $s\in[1,\infty)$. We use the canonical arrow $\pi: C(H_N^{s+})\to C(S_N^+)$ onto the quantum permutation groups, and we describe how the characters of $C(H_{N}^{s+})$ behave with respect to this morphism $\pi$ thanks to the description of the fusion rules binding irreducible corepresentations of $C(H_N^{s+})$ (\cite{BV09}). This allows us to construct states on the central $C^*$-algebra $C(H_N^{s+})_0$ generated by the characters of $C(H_{N}^{s+})$ and to use a fundamental theorem proved by M.Brannan giving a method to construct nets of trace-preserving, normal, unital and completely positive maps on the von Neumann algebra of a compact quantum group $\G$ of Kac type (\cite{Bra11}). \end{abstract}

\maketitle

\bibliographystyle{amsplain}

\section*{Introduction} A (classical) discrete group $\Gamma$ has the Haagerup property if (and only if) there is a net $(\varphi_i)$ of normalized positive definite functions in $C_0(\Gamma)$ converging pointwise to the constant function $1$. There are lots of examples of discrete groups with the Haagerup property: all amenable groups have this property. The free groups $F_N$ are examples of discrete groups with Haagerup property (see \cite{Haa79}) but which are not amenable. Thus, one says that the Haagerup property is a weak form of amenability. This property is also known as a ``strong negation" of Kazhdan's property $(T)$: the only (classical) discrete groups with both properties are finite. Another weak form of amenability is the weak amenability, see below for examples in the quantum setting. One can find more examples and a more complete approach to the problems and questions related to the Haagerup property, also called ``a-$T$-amenability", in \cite{CCJ}.

The Haagerup property has many interests in various fields of mathematics such as geometry of groups or functional analysis. We can mention e.g. groups with wall space structures (see \cite{CSV12} and \cite{CDCH10}) as illustrations of the interest in the Haagerup property with respect to the theory of geometry of groups. In functional analysis, the Haagerup property appears e.g. in questions related to the Baum-Connes conjecture (see \cite{HK01}) or in Popa's deformation/rigidity techniques (see \cite{Pop06}).

In \cite{Bra11}, a natural definition of the Haagerup property for compact quantum groups $\G$ of Kac type is proposed: $\G$ has the Haagerup approximation property if and only if its associated (finite) von Neumann algebra $L^{\infty}(\G)$ has the Haagerup property (see also below, Definition \ref{HaagCQG}). We use this definition with the slight modification: the \textit{dual} $\widehat{\G}$ of $\G$ has the Haagerup property if $L^{\infty}(\G)$ has the Haagerup property, so that this definition is closer to the classical case where $\widehat{\G}$ is a classical discrete group.
The author of \cite{Fre12} proposes another definition for the Haagerup property of discrete quantum groups: $\widehat{\G}$ has the Haagerup property if there exists a net $(a_i)$ in $c_0(\widehat{\G})$ which converges to $1$ pointwise and such that the associated multipliers $m_{a_i}$ are unital and completely positive. These approaches are equivalent in the unimodular case.

In \cite{Bra11} and \cite{Bra12}, the author shows that the duals of the compact quantum groups $O_N^+$, $U_N^+$ and $S_N^+$, introduced by Wang (see \cite{Wang2} and \cite{Wang}) have the Haagerup property. In fact, in \cite{Bra12}, it is proved that any trace-preserving quantum automorphism group of a finite dimensional $C^*$-algebra has the Haagerup property. In \cite{Fre12}, using some block decompositions and Brannan's proof of the fact that $\widehat{O_N^+}$ has the Haagerup property (precisely that some completely positive multipliers can be found), the author proves that $\widehat{O_N^+}$ is weakly amenable (in fact, it is also proved in \cite{Fre12} that the $\widehat{U_N^+}\subset\Z*\widehat{O_N^+}$ is weakly amenable, and an argument of monoidal equivalence allows to prove, in particular, that $\widehat{S_N^+}$ is weakly amenable too). In \cite{Fim10}, a definition of property $(T)$ for discrete quantum groups and some classical properties for discrete groups are generalized, for instance: discrete quantum groups with property $(T)$ are finitely generated and unimodular.

The aim of this paper is to prove that the duals of quantum reflection groups $H_N^{s+}$, introduced in \cite{BBCC11}, have the Haagerup property. It is a natural generalization of the case $s=1$ treated in \cite{Bra12} (since $H_N^{1+}=S_N^+$). However, this generalization is not immediate: as a matter of fact, the sub $C^*$-algebra generated by the characters is not commutative so that the strategy used in \cite{Bra11} and \cite{Bra12} does not work anymore. However, a fundamental tool of the proof of the main result of our paper is \cite[Theorem 3.7]{Bra11}.

What motivates our paper is also the fact that quantum reflection groups are free wreath products between $\Z_s$ and $S_N^+$ (see \cite{Bic04} and Theorem \ref{N23} below) and the result proved in our paper naturally leads to the following question: is it true that if $\Gamma$ is a discrete group which has the Haagerup property then $\widehat{\Gamma}\wr_wS_N^+$ has the Haagerup property ? One can notice the similarity with the result in \cite{CSV12} concerning (classical) wreath products of discrete groups: If $\Gamma, \Gamma'$ are countable discrete groups with the Haagerup property then $\Gamma\wr\Gamma'$ also has the Haagerup property. This similarity is however formal: in our paper we are considering (free) wreath products of groups whose \textit{duals} have the Haagerup property.

Our proof of the fact $\widehat{H_N^{s+}}$ have the Haagerup property relies on the knowledge of the fusion rules of the associated compact quantum group $H_N^{s+}$, determined in \cite{BV09}. Indeed, there is no general result about fusion rules for free wreath products of compact quantum groups yet.

The rest of the paper is organized as follows. In the section \ref{prelim}, we recall the definition of the Haagerup property for compact quantum groups of Kac type and we give the result of Brannan concerning the construction of normal, unital, completely positive and trace-preserving maps on $L^{\infty}(\G)$ (see Theorem \ref{fond}). We also give a positive answer to a question asked in \cite{Wor87}, in the discrete and Kac setting case, concerning symmetric tensors with respect to the coproduct. Then we collect some results on Tchebyshev polynomials: some are already mentioned and used in \cite{Bra11}, but we give suitably adapted statements and proofs for our purpose. Thereafter, we recall the definition of quantum reflection groups $H_N^{s+}$, and we describe their irreducible corepresentations and the fusion rules binding them. We also recall that at $s=1$, we get the quantum permutation groups $S_N^+$. In section \ref{corep}, we identify the images of the irreducible characters of $C(H_N^{s+})$ by the canonical morphism onto $C(S_N^+)$. In the section \ref{HAPHNS}, we prove that the duals of the quantum reflection groups $H_N^{s+}$ have the Haagerup approximation property for all $N\ge4$.

\section{Preliminaries}\label{prelim}

Let us first fix some notations. One can refer to \cite{Bra11}, \cite{VV07}, \cite{KV00} and \cite{Wor95} for more details. 
In this paper, $\G=(C(\G),\Delta)$ will denote a compact quantum group, where $C(\G)$ is a full Woronowicz $C^*$-algebra. Furthermore, every compact quantum group $\G$ considered in this paper is of Kac type (or equivalentely, its dual $\widehat{\G}$ is unimodular) that is: the unique Haar state $h$ on $C(\G)$, is tracial. (We recall that $L^{\infty}(\G)$ is defined by $L^{\infty}(\G)=C_r(\G)''=\pi_h(C(\G))''$, where $(L^2(\G),\pi_h)$ is the GNS construction associated to $h$).

\subsection{Haagerup property for compact quantum groups of Kac type}

\begin{defi}\label{HaagCQG}
The dual $\widehat{\G}$ of a compact quantum group $\G=(C(\G),\Delta)$ of Kac type has the Haagerup approximation property if the finite von Neumann algebra $(L^{\infty}(\G),h)$ has the Haagerup approximation property i.e. if there exists a net $(\phi_x)$ of trace preserving, normal, unital and completely positive maps on $L^{\infty}(\G)$ such that their unique extensions to $L^2(\G)$ are compact operators and $(\phi_x)$ converges to $id_{L^{\infty}(\G)}$ pointwise in $L^2$-norm.
\end{defi}

One essential tool to construct nets of normal, unital, completely positive and trace preserving maps (we will say \textit{NUCP trace preserving} maps) is the next theorem proved in \cite{Bra11}. We will denote by $Irr(\G)$ the set indexing the equivalence classes of irreducible corepresentations of a compact quantum group $\G$ and by $Pol(\G)$ the linear space spanned by the matrix coefficients of such corepresentations $u^{\alpha}, \alpha\in Irr(\G)$. If $\alpha\in Irr(\G)$, let $L_{\alpha}^2(\G)\subset L^2(\G)$ be the subspace spanned by the GNS images of matrix coefficients $u_{ij}^{\alpha},\ i,j\in\{1,\dots,d_{\alpha}\}$ of the irreducible unitary corepresentation $u^{\alpha}$ ($d_{\alpha}=\dim (u_{ij}^{\alpha}))$ and $p_{\alpha}: L^2(\G)\to L_{\alpha}^2(\G)$ be the associated orthogonal projection. Then $L^2(\G)=l^2-\bigoplus_{\alpha\in Irr(\G)}L_{\alpha}^2(\G)$. We denote by $C(\G)_0\subset C(\G)$ the $C^*$-algebra generated by the irreducible characters $\chi_{\alpha}=\sum_{i=1}^{d_{\alpha}}u_{ii}^{\alpha}$ of a compact quantum group $\G$ and $\chi_{\overline{\alpha}}$ the character of the associated conjugate corepresentation $u^{\overline{\alpha}}$.

\begin{thm}\label{fond}\cite[Theorem 3.7]{Bra11}
Let $\G=(C(\G),\Delta)$ be a compact quantum group of Kac type. Then for any state $\psi\in C(\G)_0^*$,
the map $$T_{\psi}=\sum_{\alpha\in Irr(\G)}\frac{\psi(\chi_{\overline{\alpha}})}{d_{\alpha}}p_{\alpha}$$ is a unital contraction on $L^2(\G)$ and the restriction of $T_{\psi}$ to $L^{\infty}(\G)$ defines a NUCP $h$-preserving map still denoted $T_{\psi}$.
\end{thm}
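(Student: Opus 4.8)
The plan is to verify the three assertions of Theorem~\ref{fond} in turn: that $T_\psi$ is well-defined and bounded on $L^2(\G)$, that it is unital, and that its restriction to $L^\infty(\G)$ is NUCP and $h$-preserving. First I would check that the formula makes sense. Since $\psi$ is a state on the unital $C^*$-algebra $C(\G)_0$, it is a contractive positive functional; applying it to the self-adjoint (indeed unitary, in the Kac case) elements $\chi_{\overline\alpha}$ and using $\|\chi_{\overline\alpha}\|\le d_\alpha$ gives $|\psi(\chi_{\overline\alpha})|\le d_\alpha$, hence each coefficient $\psi(\chi_{\overline\alpha})/d_\alpha$ has modulus at most $1$. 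As the $p_\alpha$ are mutually orthogonal projections summing strongly to the identity on $L^2(\G)=\ell^2\text{-}\bigoplus_\alpha L^2_\alpha(\G)$, the operator $T_\psi$ is a well-defined normal operator on $L^2(\G)$ of norm $\le 1$; it is unital because $p_{\mathrm{triv}}=p_0$ acts on $\widehat 1$ and the trivial corepresentation has $d_0=1$, $\chi_{\overline 0}=1$, so $\psi(\chi_{\overline 0})/d_0=\psi(1)=1$.

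The substantial part is showing that $T_\psi$ preserves $\mathrm{Pol}(\G)$ and that its restriction there extends to a NUCP map on the von Neumann algebra. The idea is to identify $T_\psi$ on coefficient matrices. Using the Peter--Weyl orthogonality relations for the tracial Haar state (Kac type), one computes that for a matrix coefficient $u^\alpha_{ij}$,
\[
T_\psi(u^\alpha_{ij})=\frac{\psi(\chi_{\overline\alpha})}{d_\alpha}\,u^\alpha_{ij},
\]
so $T_\psi$ acts as a Fourier multiplier with symbol constant on each isotypic block. To see complete positivity one realises $T_\psi$ as a convolution operator: define the functional $\omega_\psi$ on $\mathrm{Pol}(\G)$ by $\omega_\psi(u^\alpha_{ij})=\delta_{ij}\,\psi(\chi_{\overline\alpha})/d_\alpha$, equivalently $\omega_\psi=(\mathrm{id}\otimes\psi)\circ\kappa$ for a suitable completely positive map built from $\psi$ and the antipode/conjugation, and then $T_\psi=(\mathrm{id}\otimes\omega_\psi)\circ\Delta$. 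Since $\Delta$ is a $*$-homomorphism and $\omega_\psi$ is a state (positivity of $\omega_\psi$ follows from positivity of $\psi$ on the central algebra $C(\G)_0$ together with the block structure — this is exactly where one uses that $\psi$ is a state and not merely a functional), the map $T_\psi=(\mathrm{id}\otimes\omega_\psi)\circ\Delta$ is unital and completely positive on $C(\G)$.

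Finally, $h$-preservation follows from the invariance of the Haar state: $h\circ T_\psi=h\circ(\mathrm{id}\otimes\omega_\psi)\circ\Delta=(h\otimes\omega_\psi)\circ\Delta=\omega_\psi(1)\,h=h$, using left invariance $(h\otimes\mathrm{id})\Delta=h(\cdot)1$ in the appropriate slot, or directly from the multiplier description since $T_\psi$ kills nothing on the trivial block and each $p_\alpha$ with $\alpha\neq 0$ lands in $\ker h$. A trace-preserving UCP map on a dense $*$-subalgebra of a finite von Neumann algebra is automatically $L^2$-bounded with the right constant and extends normally to $L^\infty(\G)$; the extension agrees with $T_\psi$ on $L^2(\G)$ by density, closing the loop. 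The main obstacle I anticipate is pinning down precisely the identity $T_\psi=(\mathrm{id}\otimes\omega_\psi)\circ\Delta$ and verifying that $\omega_\psi$ is positive: one must use that $C(\G)_0$ is \emph{central} (its elements commute with $\mathrm{Pol}(\G)$, or at least the characters are grouplike-central enough) so that testing positivity of $\omega_\psi$ reduces to positivity of $\psi$ on characters; getting the normalisations $1/d_\alpha$ and the role of $\overline\alpha$ versus $\alpha$ exactly right in the Peter--Weyl computation is the delicate bookkeeping.
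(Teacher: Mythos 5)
First, note that the paper does not actually prove Theorem \ref{fond}: it is quoted verbatim from \cite[Theorem 3.7]{Bra11}, and the only trace of its proof in the present text is the remark that ``the averaging methods used to prove this theorem'' also yield Theorem \ref{centralWor}. Your overall architecture is indeed the one used there: identify $T_{\psi}$ as the multiplier $u^{\alpha}_{ij}\mapsto \frac{\psi(\chi_{\overline{\alpha}})}{d_{\alpha}}u^{\alpha}_{ij}$, realise it as a convolution $(\mathrm{id}\otimes\omega_{\psi})\circ\Delta$ by a functional $\omega_{\psi}$ with $\omega_{\psi}(u^{\alpha}_{ij})=\delta_{ij}\psi(\chi_{\overline{\alpha}})/d_{\alpha}$, deduce complete positivity from the fact that $\Delta$ is a $*$-homomorphism and $\omega_{\psi}$ is a state, and get $h$-preservation from invariance of $h$. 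The $L^2$-contraction and unitality arguments are fine (though $\chi_{\overline{\alpha}}$ is neither self-adjoint nor unitary in general; what you actually use is just $\|\chi_{\overline{\alpha}}\|\le d_{\alpha}$ together with contractivity of $\psi$).

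The genuine gap is exactly the step you flag and then wave at: the positivity of $\omega_{\psi}$ on all of $C(\G)$ (or $\mathrm{Pol}(\G)$). This does \emph{not} follow from ``positivity of $\psi$ on $C(\G)_0$ together with the block structure'': a functional prescribed blockwise by $u^{\alpha}_{ij}\mapsto\delta_{ij}c_{\alpha}$ is a state only for very particular families $(c_{\alpha})$, and checking $\omega_{\psi}(x^*x)\ge 0$ involves decomposing products of coefficients of different irreducibles, which is precisely the hard part. The missing ingredient is the $h$-preserving conditional expectation $E:C(\G)\to C(\G)_0$ with $E(u^{\alpha}_{ij})=\frac{\delta_{ij}}{d_{\alpha}}\chi_{\alpha}$, which the paper constructs explicitly in the proof of Theorem \ref{centralWor} as $E=\Delta^*\circ\Sigma\circ\Delta$; its contractivity on the $C^*$-level uses $(h\otimes h)\circ\Delta=h$ and, crucially, traciality of $h$ (this is where the Kac hypothesis enters, as the paper points out). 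Once $E$ is in hand, $\omega_{\psi}:=\psi\circ E$ (composed with the $*$-operation or with $\alpha\mapsto\overline{\alpha}$ to account for the conjugate in the statement) is a state as the composition of a UCP map with a state, and your argument closes. Also, your alternative suggestion $\omega_{\psi}=(\mathrm{id}\otimes\psi)\circ\kappa$ ``for a suitable completely positive map built from the antipode'' is not a construction; the antipode is not positive, and it is the expectation $E$, not $\kappa$, that does the work. As written, the proposal assumes the key lemma rather than proving it.
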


The averaging methods used to prove this theorem allow us to answer, in a restricted setting, a question asked in \cite{Wor87}.

Let $\G=(C(\G),\Delta)$ be a compact quantum group. Then consider the $C^*$-subalgebra $C(\G)_{\text{central}}:=\{a\in C(\G): \Delta(a)=\Sigma\circ\Delta(a)\}$ i.e. the $C^*$-subalgebra of the symmetric tensors in $C(\G)\otimes C(\G)$ with respect to $\Delta$ ($\Sigma$ denotes the usual flip map $\Sigma: C(\G)\otimes C(\G)\to C(\G)\otimes C(\G), a\otimes b\mapsto b\otimes a$).
In \cite{Wor87}, the author also defines $Pol(\G)_{\text{central}}:=\{a\in Pol(\G): \Delta(a)=\Sigma\circ\Delta(a)\}.$ We recall the question asked by Woronowicz (see \cite{Wor87} thereafter Proposition 5.11): 
\begin{quest}
Is $Pol(\G)_{\emph{central}}$ dense in $C(\G)_{\emph{central}}$ (for the norm of $C(\G)$) ? 
\end{quest}
Then the answer is yes, at least in the Kac and discrete setting. We simply denote by $||.||$ the norm on $C(\G)$.
It is clear, and proved in \cite{Wor87}, that $Pol(\G)_{\text{central}}=\text{span}\{\chi_{\alpha}: \alpha\in Irr(\G)\}$ where $\chi_{\alpha}=\sum_{i=1}^{d_{\alpha}}u_{ii}^{\alpha}$ denotes the character of an irreducible finite dimentional corepresentation $(u_{ij}^{\alpha})$. So the problem reduces to prove that $C(\G)_{\text{central}}\subset\overline{\text{span}}^{||.||}\{\chi_{\alpha}: \alpha\in Irr(\G)\},$
the other inclusion being clear.

\begin{thm}\label{centralWor}
Let $\G_r=(C(\G_r),\Delta_r)$ be a compact quantum group of Kac type with faithful Haar state. Then $\overline{Pol(\G_r)_{\emph{central}}}^{||.||}=C(\G_r)_{\emph{central}}.$
\end{thm}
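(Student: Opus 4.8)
The plan is to show that any element $a \in C(\G_r)_{\text{central}}$ lies in the closed linear span of the irreducible characters by using the Haar state to project $a$ onto each spectral subspace and checking that these projections are scalar multiples of characters. Concretely, write $a = \sum_{\alpha \in Irr(\G_r)} a_\alpha$ as its (a priori only $L^2$-convergent) Fourier/Peter--Weyl decomposition, where $a_\alpha \in \text{span}\{u^\alpha_{ij}\}$ is obtained from $a$ by applying the spectral projection. Since $\G_r$ is of Kac type with faithful Haar state, $L^\infty(\G_r)$ is a finite von Neumann algebra and these projections are the restrictions of the $p_\alpha$ from Theorem \ref{fond}; in particular each $a_\alpha = p_\alpha(a)$, viewed in $L^2$, lies in the finite-dimensional space $L^2_\alpha(\G_r)$. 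The first step is to record that the centrality condition $\Delta(a) = \Sigma \circ \Delta(a)$ passes to each component: because $\Delta$ maps $\text{span}\{u^\alpha_{ij}\}$ into $\text{span}\{u^\alpha_{ik}\} \otimes \text{span}\{u^\alpha_{kj}\}$ and the flip respects the bigrading, $\Delta(a) = \Sigma\Delta(a)$ forces $\Delta(a_\alpha) = \Sigma\Delta(a_\alpha)$ for every $\alpha$, i.e. $a_\alpha \in Pol(\G_r)_{\text{central}} = \text{span}\{\chi_\beta\}$, hence $a_\alpha \in \C\,\chi_\alpha$ (the only character living in the $\alpha$-block). So $a = \sum_\alpha \lambda_\alpha \chi_\alpha$ with convergence in $L^2$.

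The real content is to upgrade this $L^2$-convergent expansion to norm convergence in $C(\G_r)$, which is exactly where I expect the main obstacle to lie. The idea is to use the averaging construction behind Theorem \ref{fond}: for a suitable net of states $\psi_x$ on $C(\G_r)_0$ one gets NUCP maps $T_{\psi_x}$ on $L^\infty(\G_r)$ acting on each block by the scalar $\psi_x(\chi_{\bar\alpha})/d_\alpha$; but what I actually want is a net of \emph{finite-rank} Fourier-type multipliers $\Phi_n = \sum_\alpha c^{(n)}_\alpha p_\alpha$ that (i) are completely positive and bounded uniformly on $C(\G_r)$, (ii) have $c^{(n)}_\alpha \to 1$ for each $\alpha$, and (iii) are ``finitely supported'' in $\alpha$ or at least give a genuine Fej\'er-type summation. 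Applied to $a$, $\Phi_n(a)$ is a finite linear combination $\sum_\alpha c^{(n)}_\alpha \lambda_\alpha \chi_\alpha$ lying in $\text{span}\{\chi_\alpha\}$, it stays in $C(\G_r)_{\text{central}}$ by the same block argument, it is bounded in $C(\G_r)$-norm by $\|a\|$ up to a constant, and $\Phi_n(a) \to a$ in $L^2$; a standard argument (using that the $\Phi_n$ are unital, completely positive, trace-preserving, together with $L^2$-convergence and the uniform bound) then promotes this to convergence in $C(\G_r)$-norm, or at least produces a norm-limit that must equal $a$. For the existence of such $\Phi_n$ one exploits that $\G_r$ is, in the cases of interest, coamenable-by-nothing is not assumed, so instead one uses the general fact that for a compact quantum group of Kac type the Ces\`aro/Fej\'er averages associated to a length function or to a sequence of finitely supported states on $C(\G_r)_0$ converge in norm on $C(\G_r)$ — this is precisely the type of approximation property that the machinery of \cite{Bra11} provides.

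An alternative, cleaner route that avoids constructing approximating multipliers by hand: since $h$ is faithful, the conditional expectation $E: C(\G_r) \to C(\G_r)_{\text{central}}$ onto the fixed-point algebra of the ``adjoint coaction'' (equivalently, the map that replaces each $u^\alpha_{ij}$ by $\delta_{ij}\chi_\alpha/d_\alpha$) is the $h$-preserving conditional expectation onto the $C^*$-algebra generated by the characters, and one checks directly that its range is $\overline{\text{span}}^{\|\cdot\|}\{\chi_\alpha\}$; then show $C(\G_r)_{\text{central}}$ coincides with this range. One inclusion $\overline{\text{span}}\{\chi_\alpha\} \subseteq C(\G_r)_{\text{central}}$ is already noted in the excerpt. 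For the reverse, given $a \in C(\G_r)_{\text{central}}$, the block argument above shows $E(a) = a$ (each block of $a$ is already a multiple of $\chi_\alpha$, which $E$ fixes), so $a$ lies in the range of $E$. Thus everything reduces to the single genuinely nontrivial point: that the norm closure of $\text{span}\{\chi_\alpha\}$ is exactly the range of $E$, i.e. that $E$ maps $C(\G_r)$ into $\overline{\text{span}}^{\|\cdot\|}\{\chi_\alpha\}$. This last step is where I would deploy the averaging/Fej\'er estimates from the proof of Theorem \ref{fond}, and it is the step I expect to require the most care.
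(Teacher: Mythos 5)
There is a genuine gap, and it sits exactly where you predicted it would. Both of your routes correctly reduce the theorem to one point: showing that the block-diagonal map $E$ sending $u^{\alpha}_{ij}\mapsto \delta_{ij}\chi_\alpha/d_\alpha$ is bounded for the $C^*$-norm of $C(\G_r)$ with range inside $\overline{\mathrm{span}}^{\|\cdot\|}\{\chi_\alpha\}$ — but neither route actually establishes this. Your first route asserts that one can find finitely supported UCP multipliers $\Phi_n$ with $\Phi_n(a)\to a$ \emph{in norm}, either by a ``standard argument'' from $L^2$-convergence plus a uniform bound, or by a ``general fact'' about Ces\`aro/Fej\'er averages for Kac-type quantum groups. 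Neither works: $L^2$-convergence together with a uniform operator-norm bound does not imply norm convergence (already in $L^\infty[0,1]$ indicators of small sets converge to $0$ in $L^2$ but not in norm), and norm-convergence of finitely supported UCP multipliers to the identity is a strong approximation property (of coamenability/CMAP type) that is simply not available for a general Kac-type compact quantum group and is not what Theorem \ref{fond} provides — that theorem only produces NUCP maps on $L^\infty(\G_r)$, with no norm-convergence statement on $C(\G_r)$.

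The idea you are missing is the paper's one-line construction of $E$: set $E=\Delta_r^*\circ\Sigma\circ\Delta_r$, where $\Delta_r^*$ is the Hilbertian adjoint of the isometry $\Delta_r:L^2(\G_r)\to L^2(\G_r)\otimes L^2(\G_r)$. The crucial observation — and this is precisely where the Kac hypothesis enters — is that because $h$ is a \emph{trace}, $\Delta_r^*$ is contractive not only in $L^2$ but for the operator norms, so it restricts to a contraction $C(\G_r)\otimes C(\G_r)\to C(\G_r)$ mapping $Pol\otimes Pol$ into $Pol$. Then $\|E\|\le 1$, the orthogonality relations give $E(u^{\alpha}_{ij})=\delta_{ij}\chi_\alpha/d_\alpha$, so $E$ maps the dense subspace $Pol(\G_r)$ into $\mathrm{span}\{\chi_\alpha\}$ and hence all of $C(\G_r)$ into $\overline{\mathrm{span}}^{\|\cdot\|}\{\chi_\alpha\}$ by continuity; and for central $a$ one has $E(a)=\Delta_r^*\Delta_r(a)=a$ directly from $\Sigma\Delta_r(a)=\Delta_r(a)$ and $\Delta_r^*\Delta_r=\mathrm{id}$ (your Peter--Weyl block decomposition of $a$ is correct but unnecessary for this). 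Without this norm-contractivity of $\Delta_r^*$ — or some substitute for it — your argument does not close.
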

\begin{proof}
We first note that $\Delta_r$ preserves the trace in the sense that $(h\otimes h)\circ
\Delta_r = h$. As a result the Hilbertian adjoint $\Delta_r^*$, of the $L^2$-extension of $\Delta_r$, is well-defined and we
have $||\Delta_r^*(x)||\le ||x||$ for $x\in C(\G_r)\otimes C(\G_r)$ with respect to the
operator norms (note that this is particular to the tracial
situation). Since $\Delta_r^*$ clearly maps the subspace $Pol(\G_r)\otimes Pol(\G_r)$ of
$L^2(\G_r)\otimes L^2(\G_r)$ to $Pol(\G_r)$, it also restricts to a contractive map
from $C(\G_r)\otimes C(\G_r)$ to $C(\G_r)$, still denoted $\Delta_r^*$. Now we put $E = \Delta_r^*\circ \Sigma\circ \Delta_r
: C(\G_r) \to C(\G_r)$. We have $||E||\le 1$, and for $a \in C(\G_r)_{\text{central}}, E(a) =
\Delta_r^*\circ \Delta_r(a) = a$ so that $C(\G_r)_{\text{central}} \subset E(C(\G_r))$.

But, on the other hand, for any matrix coefficient of a finite dimensional unitary corepresentations $(u_{ij}^{\alpha})$, we have
\begin{align*}
E(u_{ij}^{\alpha})&=\Delta_r^*\circ\Sigma\circ\Delta_r(u_{ij}^{\alpha})=\Delta_r^*\circ\Sigma\left(\sum_{k}u_{ik}^{\alpha}\otimes u_{kj}^{\alpha}\right)=\Delta_r^*\left(\sum_{k}u_{kj}^{\alpha}\otimes u_{ik}^{\alpha}\right).\\
\end{align*}
We compute $\Delta_r^*\left(\sum_ku_{kj}^{\alpha}\otimes u_{ik}^{\alpha}\right)$ using the duality pairing induced by the inner product coming from the Haar state $h$: let $\beta\in Irr(\G_r)$, then for all $1\le p,q\le d_{\beta}$:
\begin{align*}
\left<u_{pq}^{\beta},\Delta_r^*\left(\sum_ku_{kj}^{\alpha}\otimes u_{ik}^{\alpha}\right)\right>_{h}&=\sum_{l,k}\left<u_{pl}^{\beta}\otimes u_{lq}^{\beta},u_{kj}^{\alpha}\otimes u_{ik}^{\alpha}\right>_{h}=\frac{\delta_{\alpha\beta}\delta_{ij}\delta_{pq}}{d_{\alpha}^2}\\
&=\left <u_{pq}^{\beta},\frac{\delta_{ij}}{d_{\alpha}}\chi_{\alpha}\right>_{h}.
\end{align*}
Then summarizing, we have $E(u_{ij}^{\alpha})=\frac{\delta_{ij}}{d_{\alpha}}\chi_{\alpha}\in\text Pol(\G_r)_{\text{central}}$, $||E||=1$ and $E|_{Pol(\G_r)_{\text{central}}}=id$. Thus we obtain a conditional expectation $E: C(\G_r)\to \overline{Pol(\G_r)_{\text{central}}}^{||.||}=\overline{\text{span}}^{||.||}\{\chi_{\alpha}: \alpha\in Irr(\G_r)\}$.
But we have $C(\G_r)_{\text{central}}\subset E(C(\G_r))$ and the result follows.
\end{proof}

\begin{nota}
We will denote by $Pol(\G)_0$ and $C(\G)_0$ the central $*$-algebras and $C^*$-algebras generated by the irreducible characters of a compact quantum group $\G$.
\end{nota}
\bigskip

\subsection{Tchebyshev polynomials.}\label{ao}

\begin{defi}
We define a family of polynomials $(A_t)_{t\in\N}$ as follows: $A_0=1, A_1=X$ and for all $t\ge1$ 
\begin{equation}\label{recurform}
A_1A_t=A_{t+1}+A_{t-1}.
\end{equation}
We call them the dilated Tchebyshev polynomials of second kind.
\end{defi}

We will use the following results on Tchebyshev polynomials $A_t$. The second one is based upon a result proved in \cite[Proposition 4.4]{Bra11}, but suitably adapted to our purpose.

\begin{prp}\label{ChebRec}
for all $t,s\ge1$ we have:
$A_tA_s=A_{t+s}+A_{t-1}A_{s-1}$
\end{prp}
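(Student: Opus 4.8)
The plan is to prove the identity $A_tA_s=A_{t+s}+A_{t-1}A_{s-1}$ for all $t,s\ge 1$ by induction on $s$, keeping $t$ arbitrary (or, more symmetrically, by induction on $t+s$, but a single-variable induction on $s$ is cleanest). The base case $s=1$ is exactly the defining recursion \eqref{recurform}: $A_tA_1=A_{t+1}+A_{t-1}=A_{t+1}+A_{t-1}A_0$, since $A_0=1$. For the inductive step I would assume the formula holds for $s$ and for $s-1$ (so really this is a two-step induction, which is why one should also check $s=2$ as a second base case, or phrase the hypothesis as ``holds for all values up to $s$''), and compute $A_tA_{s+1}$ by using \eqref{recurform} in the form $A_{s+1}=A_1A_s-A_{s-1}$.

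The computation then runs: $A_tA_{s+1}=A_t(A_1A_s-A_{s-1})=A_1(A_tA_s)-A_tA_{s-1}$. Applying the inductive hypothesis to both products on the right gives $A_1(A_{t+s}+A_{t-1}A_{s-1})-(A_{t+s-1}+A_{t-1}A_{s-2})$. Regrouping the terms not involving $A_{t-1}$ we get $A_1A_{t+s}-A_{t+s-1}=A_{t+s+1}$ by \eqref{recurform} again; and the terms involving $A_{t-1}$ give $A_{t-1}(A_1A_{s-1}-A_{s-2})=A_{t-1}A_s$, once more by \eqref{recurform}. Summing, $A_tA_{s+1}=A_{t+s+1}+A_{t-1}A_s$, which is the desired identity with $s$ replaced by $s+1$.

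I expect the only real subtlety to be bookkeeping at the low end of the index range: the step above invokes the hypothesis at $s-1$, which forces a genuine two-term (or strong) induction, and one must make sure the arguments like $A_{s-2}$ make sense, i.e. handle $s=1$ and $s=2$ as explicit base cases before the general step takes over. Once those edge cases are dispatched, the algebra is a routine two-line manipulation using \eqref{recurform} three times, so there is no serious obstacle; this is purely a polynomial-identity induction and requires none of the operator-algebraic machinery from the rest of the paper.
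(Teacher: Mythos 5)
Your proof is correct and follows essentially the same route as the paper, which simply states that the identity is ``easily proved by induction'' using the recursion \eqref{recurform}; since the identity is symmetric in $t$ and $s$, inducting on $s$ rather than $t$ is the same argument. Your attention to the two-step nature of the induction (needing $s=1$ and $s=2$ as base cases before $A_{s-2}$ is legitimate) is the right bookkeeping and completes the sketch the paper leaves to the reader.
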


\begin{proof}
This result is easily proved by induction on $t\ge1$.
\end{proof}

\begin{prp}\label{MajCheb}
Let $N\ge2$. For all $x\in(2,N)$, there exists a constant $c\in(0,1)$ 
such that for all integers $t\ge1$ we have $$0<\frac{A_t(x)}{A_t(N)}\le \left(\frac{x}{N}\right)^{c t}.$$
\end{prp}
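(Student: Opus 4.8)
The plan is to get an explicit handle on $A_t(x)$ by solving the linear recurrence \eqref{recurform}. Writing $x = q + q^{-1}$ for the appropriate $q = q(x) > 1$ (so that $q$ is the larger root of $q^2 - xq + 1 = 0$), the standard theory of the dilated Tchebyshev polynomials of the second kind gives the closed form
\begin{equation*}
A_t(x) = \frac{q^{t+1} - q^{-(t+1)}}{q - q^{-1}}.
\end{equation*}
Indeed, one checks $A_0 = 1$, $A_1 = q + q^{-1} = X$, and the recurrence $A_1 A_t = A_{t+1} + A_{t-1}$ holds because $(q+q^{-1})(q^{t+1}-q^{-(t+1)}) = (q^{t+2} - q^{-(t+2)}) + (q^t - q^{-t})$. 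For $x \in (2,\infty)$ we have $q > 1$, so $A_t(x) > 0$ for all $t \ge 0$; this already gives the left inequality.

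Next I would compare the two values. Let $q = q(x)$ and $Q = q(N)$ be the roots corresponding to $x$ and $N$ respectively; since $2 < x < N$ and $u \mapsto u + u^{-1}$ is strictly increasing on $(1,\infty)$, we get $1 < q < Q$. From the closed form,
\begin{equation*}
\frac{A_t(x)}{A_t(N)} = \frac{q - q^{-1}}{Q - Q^{-1}} \cdot \frac{q^{t+1} - q^{-(t+1)}}{Q^{t+1} - Q^{-(t+1)}}.
\end{equation*}
The first factor is a constant in $(0,1)$ depending only on $x$ and $N$. For the second factor, I would bound it above by a constant times $(q/Q)^{t+1}$: write it as $(q/Q)^{t+1} \cdot \frac{1 - q^{-2(t+1)}}{1 - Q^{-2(t+1)}}$, and since $Q > 1$ the denominator $1 - Q^{-2(t+1)} \ge 1 - Q^{-2} > 0$ is bounded below, while the numerator is at most $1$. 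Hence there is a constant $C = C(x,N) \ge 1$ with $A_t(x)/A_t(N) \le C\,(q/Q)^{t}$ for all $t \ge 1$.

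Finally I need to convert the base $q/Q$ into $x/N$ with an exponent $c \in (0,1)$, and absorb the constant $C$. Since $1 < q < Q$ we have $q/Q < 1$ and also, comparing $q + q^{-1} = x$ with $q < x$ (as $q^{-1} > 0$), one sees $q < x$ and similarly $Q < N$; more usefully, because $q \mapsto q + q^{-1}$ is increasing and $q, Q > 1$, the ratio $q/Q$ is controlled: in fact $\log(q/Q) \le 0$ and one can check $q/Q \le (x/N)^{c_0}$ for a suitable $c_0 \in (0,1)$ by an elementary monotonicity/convexity estimate on the function $x \mapsto \log q(x)/\log x$ on $(2,N)$, which is continuous and takes values in a compact subset of $(0,1)$ there (at $x = 2$, $q = 1$ and a limiting computation shows the ratio stays bounded away from $0$ and $1$; this is where the hypothesis $x > 2$ matters, since $q \to 1$). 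Then $A_t(x)/A_t(N) \le C (x/N)^{c_0 t}$, and since $x/N < 1$ we may shrink the exponent to swallow the constant: choosing $c \in (0, c_0)$, for all $t$ large enough $C (x/N)^{c_0 t} \le (x/N)^{c t}$, and then adjusting $c$ further (or checking the finitely many small $t$ by hand, using $A_t(x)/A_t(N) < 1$) yields the claim for all $t \ge 1$.

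The main obstacle is the last step: controlling $q/Q$ by a genuine power $(x/N)^c$ with $c$ bounded away from $0$ uniformly, and cleanly absorbing the multiplicative constant $C$ into the exponent. The behavior as $x \to 2^+$ (where $q \to 1^+$ and $\log q \sim \sqrt{x-2}$) is the delicate point, and is exactly why the statement restricts to $x \in (2,N)$; I would handle it by noting $c$ is allowed to depend on $x$ and $N$, so no uniformity in $x$ is actually required, which makes the estimate elementary once the closed form is in hand. Note this is essentially \cite[Proposition 4.4]{Bra11} with the roles of $x$ and $N$ adapted, so I would follow that argument closely.
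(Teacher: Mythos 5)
Your argument is correct and follows essentially the same route as the paper: the closed form $A_t(x)=\frac{q(x)^{t+1}-q(x)^{-t-1}}{q(x)-q(x)^{-1}}$ with $q(x)=\frac{x+\sqrt{x^2-4}}{2}$, the comparison of $(q(x)/q(N))^{t}$ with $(x/N)^{t}$ (the paper observes directly that $q(x)/q(N)=\frac{x}{N}\cdot\frac{1+\sqrt{1-4/x^2}}{1+\sqrt{1-4/N^2}}<\frac{x}{N}$, so your $c_0$ can in fact be taken equal to $1$), and the same endgame of handling the finitely many small $t$ via $\max_{1\le t<t_0}A_t(x)/A_t(N)=D<1$ and shrinking $c$. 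Two immaterial slips: the prefactor should be $\frac{Q-Q^{-1}}{q-q^{-1}}>1$ rather than its reciprocal (your constant $C\ge1$ already absorbs this), and $\log q(x)/\log x$ actually tends to $0$ as $x\to 2^{+}$ rather than staying bounded away from $0$ --- harmless, as you yourself note, since $c$ is allowed to depend on $x$ and $N$.
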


\begin{proof}
First, we follow the proof of \cite[Proposition 4.4]{Bra11} and introduce the function $q(x)=\frac{x+\sqrt{x^2-4}}{2}$, for $x>2$. Then an induction and the recursion formula (\ref{recurform}) for the polynomials $A_t$ show that for all $t\ge0$, we have 
\begin{equation*}
A_t(x)=\frac{q(x)^{t+1}-q(x)^{-t-1}}{q(x)-q(x)^{-1}}
\end{equation*}
Then using the same tricks as in \cite{Bra11}, we get that for all fixed $x\in(2,N)$ and all $t\ge1$
\smallskip
\begin{align*}
\frac{A_t(x)}{A_t(N)}&=\frac{q(x)^{t+1}-q(x)^{-t-1}}{q(N)^{t+1}-q(N)^{-t-1}}\frac{q(N)-q(N)^{-1}}{q(x)-q(x)^{-1}}\\
&=\left(\frac{x}{N}\right)^t\left(\frac{1+\sqrt{1-\frac{4}{x^2}}}{1+\sqrt{1-\frac{4}{N^2}}}\right)^t\frac{1-q(x)^{-2t-2}}{1-q(N)^{-2N-2}}\frac{1-q(N)^{-2}}{1-q(x)^{-2}}.\\
\end{align*}
Now notice that the factor $\frac{1-q(x)^{-2t-2}}{1-q(N)^{-2N-2}}$ is less than $1$ because $q$ is increasing. Furthermore, we have $$\left(\frac{1+\sqrt{1-\frac{4}{x^2}}}{1+\sqrt{1-\frac{4}{N^2}}}\right)^t\frac{1-q(N)^{-2}}{1-q(x)^{-2}}\underset{t\to\infty}\longrightarrow0$$ since the last factor does not depend on $t$ and $\frac{1+\sqrt{1-\frac{4}{x^2}}}{1+\sqrt{1-\frac{4}{N^2}}}<1$. Hence, there exists $t_0$ such that 
$
\dfrac{A_t(x)}{A_t(N)}\le\left(\dfrac{x}{N}\right)^t$
for all $t\ge t_0$. It remains to show that there exists $c\in(0,1)$ such that $\dfrac{A_t(x)}{A_t(N)}\le\left(\frac{x}{N}\right)^{c t_0}$ for all $t=1,\dots, t_0-1$, since for all $0<t<t_0$, $\left(\dfrac{x}{N}\right)^{ct_0}\le\left(\dfrac{x}{N}\right)^{ct}$ . To prove that such a $c$ exists, we notice that 
$
\max\left\{\frac{A_t(x)}{A_t(N)}: t=1,\dots,t_0-1\right\}
:=D<1
$
since the Tchebyshev polynomials are increasing on $(2,+\infty)$. Hence, it is clear that we can find $c>0$ such that $\left(\dfrac{x}{N}\right)^{ct_0}\ge D$.
\end{proof}
\begin{rque}
\begin{enumerate}
\item In \cite[Proposition 6.4]{Bra11}, the exponent is better (there is no constant $c$) but there is a constant multiplying $\left(\dfrac{x}{N}\right)^t$. Our version allows an easy proof of Proposition \ref{c05} below.
\item The previous proposition gives information on the behavior of the dilated Tchebyshev polynomials on $(2,+\infty)$: the quotient $\dfrac{A_t(x)}{A_t(N)}$ has an exponential decay with respect to $t\ge1$. We will also need some informations on this quotient when $x\in(0,2)$ and $N=2$. That is the aim of the next paragraph.
\end{enumerate}

\end{rque}

The polynomials $A_t$ are linked to the Tchebyshev polynomials of second kind $U_t$ by the following formula: $\forall t\in\N,x\in[0,1], A_t(2x)=U_t(x).$ Indeed, we recall (see \cite{Riv90} for more details) that the Tchebyshev polynomials of second kind $U_t$ are defined for all $x\in[-1,1]$ by 
\begin{equation}\label{tchebu}
U_t(x)=\frac{\sin((t+1)\arccos(x))}{\sqrt{1-x^2}}=\frac{\sin((t+1)\theta)}{\sin(\theta)},\ \ \text{ with } x=\cos(\theta).
\end{equation}

In particular, $U_0=1$, $U_1(x)=2x$ and for all $t\in\N^*, U_t(1)=t+1$. Then one can check that for all $t\in\N$ and $x\in[0,1]$: $2xU_{t}(x)=U_{t+1}(x)+U_{t-1}(x)$. 

\begin{prp}\label{propN=4}
Let $x\in(0,2)$. Then for any integer $t\ge1$
\begin{equation*}\label{N=4good}
\left|\frac{A_t(x)}{A_t(2)}\right|=\frac{1}{t+1}\left|\frac{\sin((t+1)\theta)}{\sin(\theta)}\right|,\ \ \text{ with } x=\cos(\theta).
\end{equation*}
In particular, there exists a positive constant $D<1$ such that $\forall t\ge1, \left|\dfrac{A_t(x)}{A_t(2)}\right|\le D$
and
$\dfrac{A_t(x)}{A_t(2)}\longrightarrow 0 \text{ as } t\to\infty.$
\end{prp}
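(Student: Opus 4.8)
The plan is to derive everything from the two explicit formulas already at hand: the hyperbolic-type expression for $A_t$ on $(2,\infty)$ and, by the substitution $x \mapsto 2x$, the trigonometric identity $A_t(2\cos\theta) = U_t(\cos\theta) = \sin((t+1)\theta)/\sin\theta$ for $x \in [0,1]$. First I would write $x = 2\cos\theta$ with $\theta \in (0,\pi/2)$, which is legitimate precisely because $x \in (0,2)$; under this substitution $A_t(x) = U_t(\cos\theta) = \sin((t+1)\theta)/\sin\theta$. It remains to identify $A_t(2)$: evaluating the same formula as $\theta \to 0^+$ (or using directly that $U_t(1) = t+1$, as recalled just above the proposition) gives $A_t(2) = t+1$. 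Dividing, one gets
\[
\frac{A_t(x)}{A_t(2)} = \frac{1}{t+1}\cdot\frac{\sin((t+1)\theta)}{\sin\theta},
\]
and taking absolute values yields the displayed identity. Note $A_t(2) = t+1 > 0$, so no issue with the denominator.

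Next, for the ``in particular'' part, I would bound $|\sin((t+1)\theta)| \le 1$, giving $\left|\frac{A_t(x)}{A_t(2)}\right| \le \frac{1}{(t+1)\sin\theta} \to 0$ as $t \to \infty$; this proves the limit statement immediately for each fixed $x$ (equivalently, fixed $\theta \in (0,\pi/2)$, so $\sin\theta > 0$ is a fixed positive constant). For the uniform bound $\left|\frac{A_t(x)}{A_t(2)}\right| \le D < 1$ for all $t \ge 1$, I would argue that the sequence $\left|\frac{A_t(x)}{A_t(2)}\right|$ tends to $0$, so only finitely many terms could fail to be below any given threshold; moreover each term is strictly less than $1$. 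For $t = 1$ one computes directly $\frac{A_1(x)}{A_1(2)} = \frac{x}{2} < 1$; for $t \ge 2$, the identity above gives $\left|\frac{A_t(x)}{A_t(2)}\right| \le \frac{1}{(t+1)\sin\theta}$, and one checks $\frac{1}{t+1} \le \frac13 < 1$ handles all but possibly a few small $\theta$-dependent cases — but in any event, since the sequence converges to $0$ and every individual term satisfies $\left|\frac{A_t(x)}{A_t(2)}\right| < 1$ (because $|U_t(\cos\theta)| < U_t(1) = t+1$ strictly for $\theta \in (0,\pi/2)$, as the sine quotient is strictly maximized at $\theta = 0$), the maximum $D := \sup_{t \ge 1}\left|\frac{A_t(x)}{A_t(2)}\right|$ is attained and is $< 1$.

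The only mild subtlety — and the closest thing to an obstacle — is justifying the strict inequality $|U_t(\cos\theta)| < t+1$ for all $t \ge 1$ and $\theta \in (0,\pi/2)$, which is what prevents $D$ from equalling $1$. This follows from the closed form: $\left|\frac{\sin((t+1)\theta)}{\sin\theta}\right| \le \frac{1}{\sin\theta}$, and comparing with $t+1$, one has strict inequality as soon as $\sin\theta > \frac{1}{t+1}$; for the remaining finitely many pairs (small $t$ with small $\theta$) one uses that $|\sin((t+1)\theta)| < (t+1)\sin\theta$ holds pointwise because equality in $|\sin(k\phi)| = k\sin\phi$ forces $\phi = 0$. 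Then $D < 1$ because it is a supremum of a null sequence of numbers each strictly below $1$, hence attained at some finite index. This is entirely elementary; no new estimates beyond the already-established formula for $A_t$ are needed.
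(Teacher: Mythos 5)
Your proof is correct and follows essentially the same route as the paper: rewrite $A_t(x)/A_t(2)=U_t(x/2)/(t+1)$, use the sine quotient formula for the identity and the limit, and get $D<1$ from the facts that each term is strictly below $1$ and the sequence tends to $0$. You are in fact somewhat more careful than the paper, which simply declares the existence of $D$ ``clear''; your justification of the strict inequality $|\sin((t+1)\theta)|<(t+1)\sin\theta$ for $\theta\neq0$ is exactly the missing detail.
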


\begin{proof}
First, by what we recalled above, we can write $\dfrac{A_t(x)}{A_t(2)}=\dfrac{U_t(\frac{x}{2})}{U_t(1)}=\dfrac{U_t(\frac{x}{2})}{t+1}.$ Thus, if $x=2\cos(\theta)$, we have by the relation (\ref{tchebu}) above 
\begin{equation*}\label{quot}
\left|\frac{A_t(x)}{A_t(2)}\right|=\frac{1}{t+1}\left|\frac{\sin((t+1)\theta)}{\sin(\theta)}\right|\underset{t\to\infty}{\longrightarrow}0.
\end{equation*}
On the other hand, on $[0,1]$, the polynomials $U_t, t\ge1$ have $t+1$ as a maximum, only attained in $1$. 
Then, it is clear that for all $t\ge1$ and $x\in(0,2)$: 
$$0<\frac{A_t(x)}{A_t(2)}=\frac{U_t(\frac{x}{2})}{t+1}<1.$$
So the existence of the announced constant $D$ is clear.
\end{proof}

\bigskip

\subsection{Quantum reflection groups}\label{Ash}

In this subsection, we recall the definition of the quantum reflection groups $H_N^{s+}$ and the particular case of the quantum permutation groups $S_N^+$. We also recall that $C(H_N^{s+})$ is the free wreath product of two quantum permutation algebras. In the end of this subsection, we recall the description of the irreducible corepresentations of $C(H_N^{s+})$ together with the fusion rules binding them.

\begin{defi}\label{ahs}\cite[Definition 11.3]{BBCC11}
Let $N\ge2, s\ge 1$ be integers. The quantum reflection group $H_N^{s+}$ is the pair $(C(H_N^{s+}),\Delta)$ composed of the universal $C^*$-algebra generated by $N^2$ normal elements $U_{ij}$ satisfying the following relations 
\begin{enumerate}
\item $U=(U_{ij})$ is unitary,
\item $^tU=(U_{ji})$ is unitary,
\item $p_{ij}=U_{ij}U_{ij}^*$ is a projection,
\item $U_{ij}^s=p_{ij}$
\end{enumerate}
together with the coproduct $\Delta: C(H_N^{s+})\to C(H_N^{s+})\otimes C(H_N^{s+})$ given by 
$$\Delta(U_{ij})=\sum_kU_{ik}\otimes U_{kj}.$$
\end{defi}

\begin{rque}\label{as}\
\begin{enumerate}
\item For $s=1$ we get the quantum permutation group $S_N^+$. The definition of $S_N^+$ thus may be summed up as follows (see also \cite{Wang}): $S_N^+$ is the pair $(C(S_N^+),\Delta)$ where  
\begin{enumerate}
\item $C(S_N^+)$ is the universal $C^*$-algebra generated by $N^2$ elements $v_{ij}$ such that the matrix $v=(v_{ij})$ is unitary and $v_{ij}=v_{ij}^*=v_{ij}^2$ (i.e. $v$ is a magic unitary).
\item The coproduct is given by the usual relations making of $v$ a corepresentation (the fundamental one) of $C(S_N^+)$.
\end{enumerate}
\item For $s=2$, we find the hyperoctahedral quantum group, i.e. the easy quantum group $H_N^+$ studied e.g. in \cite{Web12}.
\item\label{morphlater} There is a morphism $C(H_N^{s+})\to C(S_N^+)$ of compact quantum groups: one only has to check that the generators $v_{ij}$ of $C(S_N^+)$, satisfy the relations described in Definition \ref{ahs}, which is clear. 
\end{enumerate}
\end{rque}
\begin{nota}\label{pi}
We will denote by $\pi: C(H_N^{s+})\to C(S_N^+)$ the canonical arrow mentioned in the remark above.
\end{nota}

Here are the results concerning the irreducible corepresentations of $C(S_N^+)$:

\begin{thm}\label{BanSN}\cite[Theorem 4.1]{Ban99}
There is a maximal family $\left(v^{(t)}\right)_{t\in\N}$ of pairwise inequivalent irreducible finite dimensional unitary representations of $S_N^+$ such that:
\begin{enumerate}
\item $v^{(0)}=1$ and $v$ is equivalent to $1\oplus v^{(1)}$.
\item The conjugate of any $v^{(t)}$ is equivalent to itself that is $\overline{v^{(t)}}\simeq v^{(t)},\ \forall t\in\N$.
\item The fusion rules are the same as for $SO(3)$: $$v^{(s)}\otimes v^{(t)}\simeq\bigoplus_{k=0}^{2\min(s,t)}v^{(s+t-k)}$$
\end{enumerate}
\end{thm}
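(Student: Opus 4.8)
The plan is to determine the representation category of $S_N^+$ from Woronowicz's Tannaka--Krein duality and then to extract the fusion rules by an induction carried out in the representation ring; I assume throughout that $N\ge4$, which is precisely the range in which the intertwiner spaces below attain their maximal size (for $N\le3$ one has $S_N^+=S_N$ and the statement must be modified). Two structural observations come for free. Since the generators satisfy $v_{ij}=v_{ij}^*$, the matrix $v$ coincides with its conjugate, so $v$ is self-conjugate; as every irreducible below will be built inside a tensor power of $v$, self-conjugacy will propagate and yield $(2)$. And the vector $\xi=\sum_ie_i\in\C^N$ is fixed by $v$, because every row and every column of the magic unitary sums to $1$; hence $v\simeq 1\oplus v^{(1)}$ with $\dim v^{(1)}=N-1$, which yields $(1)$ once $v^{(1)}$ is shown to be irreducible.

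The heart of the argument is the computation of the spaces $\mathrm{Hom}(v^{\otimes k},v^{\otimes l})$. By Tannaka--Krein the representation category of $S_N^+$ is the completion of the concrete tensor category with objects the $v^{\otimes k}$ and these morphism spaces, and the magic-unitary relations translate, categorically, into the assertion that $\mathrm{Hom}(v^{\otimes k},v^{\otimes l})$ is spanned by the linear maps $T_p\colon(\C^N)^{\otimes k}\to(\C^N)^{\otimes l}$ associated to the non-crossing partitions $p$ of the $k+l$ boundary points. For $N\ge4$ a Gram-determinant computation shows that these maps are linearly independent, so that $\dim\mathrm{Hom}(v^{\otimes k},v^{\otimes l})$ equals the number of non-crossing partitions of $\{1,\dots,k+l\}$. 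In particular $h(\chi_v^k)=\dim\mathrm{Hom}(1,v^{\otimes k})$ is the $k$-th Catalan number $C_k$.

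Next I would run the fusion induction inside the representation ring $R(S_N^+)$, which is commutative because all objects are self-conjugate. Writing $[v]=1+x$ with $x=[v^{(1)}]$ and expanding $[v]^{\otimes k}=\sum_j\binom{k}{j}x^{\otimes j}$, the values $h(\chi_v^k)=C_k$ determine the integers $m_j:=\dim\mathrm{Hom}(1,(v^{(1)})^{\otimes j})$ through a triangular linear system. From $m_2=1$ one reads off that $v^{(1)}$ is irreducible; from $m_2,m_3,m_4=1,1,3$ one gets $v^{(1)}\otimes v^{(1)}\simeq 1\oplus v^{(1)}\oplus v^{(2)}$ with $v^{(2)}$ a new irreducible; and assuming $v^{(0)},\dots,v^{(t)}$ have been constructed, pairwise inequivalent, self-conjugate, and obeying the stated rules among themselves, one defines $v^{(t+1)}$ as the orthogonal complement of $v^{(t-1)}\oplus v^{(t)}$ inside $v^{(1)}\otimes v^{(t)}$ and checks, by evaluating $\dim\mathrm{End}(v^{(1)}\otimes v^{(t)})=3$ against the computed $m_j$, that $v^{(t+1)}$ is irreducible and inequivalent to the earlier ones. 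Since every irreducible occurs in some $v^{\otimes k}$, hence in some $(v^{(1)})^{\otimes j}$, hence among the $v^{(t)}$, the family $(v^{(t)})_{t\in\N}$ is maximal. The fusion rules so obtained coincide, via the correspondence $[v^{(t)}]\leftrightarrow[\,\mathrm{spin}\ t\,]$ (so $[v^{(1)}]\leftrightarrow[\rho]$, the defining $3$-dimensional representation), with those of $SO(3)$, i.e.\ $v^{(s)}\otimes v^{(t)}\simeq\bigoplus_{k=0}^{2\min(s,t)}v^{(s+t-k)}$, which is $(3)$.

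The delicate point is the intertwiner computation of the second paragraph: one must show both that the non-crossing partition maps span $\mathrm{Hom}(v^{\otimes k},v^{\otimes l})$ --- the Tannaka--Krein input, where the issue is to verify that the magic-unitary relations force no additional intertwiners --- and that for $N\ge4$ these maps are linearly independent, which is where the threshold $N\ge4$ genuinely enters, through the non-vanishing of the relevant Gram determinant. Once the Catalan moments of $\chi_v$ are in hand, the construction of the $v^{(t)}$ and the verification of the $SO(3)$ fusion rules are a matter of linear algebra and bookkeeping in $R(S_N^+)$.
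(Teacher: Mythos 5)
This theorem is quoted from \cite{Ban99} and the paper contains no proof of it, so there is no internal argument to compare yours against; what you have written is essentially Banica's original proof in its modern, noncrossing-partition formulation. The skeleton is correct: the fixed vector $\xi=\sum_ie_i$ gives $v\simeq 1\oplus v^{(1)}$, the moment computation $h(\chi_v^k)=C_k$ and the triangular system for the multiplicities $m_j=\dim\mathrm{Hom}(1,(v^{(1)})^{\otimes j})$ yield $m_2=m_3=1$, $m_4=3$, and the inductive construction of $v^{(t+1)}$ as the complement of $v^{(t-1)}\oplus v^{(t)}$ in $v^{(1)}\otimes v^{(t)}$, with irreducibility read off from $\dim\mathrm{End}(v^{(1)}\otimes v^{(t)})=3$ via Frobenius reciprocity and the induction hypothesis, is the standard bookkeeping; the threshold $N\ge4$ enters exactly where you say it does. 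The one substantive item you assert rather than prove --- that $\mathrm{Hom}(v^{\otimes k},v^{\otimes l})$ is spanned by the noncrossing-partition maps $T_p$ and that these are linearly independent for $N\ge4$ (nonvanishing of the Gram determinant) --- is precisely the content of the cited result, so your proposal is an accurate reduction to that lemma rather than a self-contained proof. Two minor points: commutativity of the full representation ring is not available a priori (it fails, e.g., for the free wreath products studied in this very paper), but you only need that $1$ is central and that the already-constructed $v^{(j)}$ are self-conjugate, which is what the induction actually provides; and to see that $v^{(t+1)}$ is nonzero one should add the dimension count $(N-1)d_t>d_{t-1}+d_t$, valid for $N\ge4$.
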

We denote by $\chi_k=\sum_{i=1}^{d_k}v_{ii}^{(k)}$ the character associated to $v^{(k)}$.

We will need the following proposition, proved in \cite{Bra12}:
\begin{prp}\label{branPi}
Let $\chi$ be the character associated to the fundamental corepresentation $v$ of $C(S_N^+)$. Then, $\chi^*=\chi$ and there is a $*$-isomorphism $C^*(\chi)=C(S_N^+)_0=C^*(\chi_t: t\in\N)\simeq C([0,N])$ identifying $\chi_t$ to the polynomial defined $\Pi_t$ by $\Pi_0=1, \Pi_1=X-1$ and $\forall t\ge1, \Pi_1\Pi_t=\Pi_{t+1}+\Pi_t+\Pi_{t-1}$.
\end{prp}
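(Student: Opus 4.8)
The plan is to build the isomorphism in two stages: first establish that $C(S_N^+)_0$ is commutative, then identify it with $C([0,N])$ via the spectral theorem, using the fusion rules to pin down the polynomials $\Pi_t$.

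First I would check that $\chi = \chi^*$. The fundamental corepresentation $v$ is a magic unitary, so each $v_{ij}$ is a self-adjoint projection; summing the diagonal entries gives $\chi^* = \sum_i v_{ii}^* = \sum_i v_{ii} = \chi$. More generally, by Theorem~\ref{BanSN}(2) every $v^{(t)}$ is self-conjugate, so $\chi_t^* = \chi_{\overline{t}} = \chi_t$; hence every $\chi_t$ is self-adjoint. Next, since $v$ is equivalent to $1 \oplus v^{(1)}$ we get $\chi = 1 + \chi_1$, and iterating the $SO(3)$-type fusion rule $v^{(1)} \otimes v^{(t)} \simeq v^{(t-1)} \oplus v^{(t)} \oplus v^{(t+1)}$ (valid for $t \ge 1$) at the level of characters yields $\chi_1 \chi_t = \chi_{t+1} + \chi_t + \chi_{t-1}$. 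An easy induction then shows each $\chi_t$ is a polynomial in $\chi_1$ (equivalently in $\chi$) with integer coefficients: define $\Pi_0 = 1$, $\Pi_1 = X - 1$, and $\Pi_1 \Pi_t = \Pi_{t+1} + \Pi_t + \Pi_{t-1}$; then $\chi_t = \Pi_t(\chi)$. In particular $C^*(\chi) = C^*(\chi_t : t \in \N) = C(S_N^+)_0$, and since $\chi$ is a single self-adjoint element this $C^*$-algebra is commutative, hence of the form $C(K)$ for $K = \mathrm{Sp}(\chi) \subseteq \R$.

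It remains to show $K = [0,N]$, i.e.\ to compute the spectrum of $\chi$. The upper bound $\|\chi\| \le N$ is immediate since $\chi$ is a sum of $N$ projections. For the reverse, one can argue as in \cite{Bra12}: the counit $\varepsilon$ of $C(S_N^+)$ sends each $v_{ij}$ to $\delta_{ij}$, so $\varepsilon(\chi) = N$, which forces $N \in \mathrm{Sp}(\chi)$. To see that the whole interval $[0,N]$ (and nothing outside it) is attained, one uses that the $\Pi_t$ are, up to an affine change of variable, the Tchebyshev-type polynomials of Definition~\ref{ao}: indeed the substitution $X \mapsto X^2 - 2$ carried out earlier in \cite{Ban99}/\cite{Bra12} intertwines the recursion for $\Pi_t$ with the recursion for the $A_{2t}$ (reflecting the relation between the fusion semirings of $S_N^+$ and $O_N^+$), and the joint spectrum of the $A_t(\chi')$ for the $O_N^+$ generator $\chi'$ is known to be $[-N,N]$. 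Transporting this through the change of variables gives $\mathrm{Sp}(\chi) = [0,N]$ exactly, and the polynomial $\Pi_t$ is then precisely the continuous function on $[0,N]$ representing $\chi_t$.

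The main obstacle is the last step: identifying the spectrum of $\chi$ precisely as $[0,N]$ rather than merely as a subset of $[0,N]$ containing $N$. This is where one must genuinely invoke the structure theory — either the known monoidal/fusion relationship between $S_N^+$ and $O_N^+$ together with Banica's computation of the spectrum in the $O_N^+$ case, or a direct argument showing that the $\Pi_t$ have all their roots in $[0,N]$ and separate points there. Everything else (self-adjointness of $\chi$, commutativity of $C(S_N^+)_0$, the polynomial recursion) follows formally from the magic unitary relations and the fusion rules of Theorem~\ref{BanSN}. Since this proposition is attributed to \cite{Bra12}, I would at this point simply cite that reference for the spectral identification and present the recursion and commutativity parts as the self-contained portion of the argument.
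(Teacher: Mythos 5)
The paper offers no proof of this proposition at all --- it is imported verbatim from \cite{Bra12} (``We will need the following proposition, proved in \cite{Bra12}'') --- so there is no internal argument to compare yours against. The self-contained part of your write-up is correct and is the standard derivation: $\chi=\chi^*$ because $v$ is a magic unitary (and more generally $\chi_t^*=\chi_{\overline{t}}=\chi_t$ by self-conjugacy), the recursion $\chi_1\chi_t=\chi_{t+1}+\chi_t+\chi_{t-1}$ follows from Theorem \ref{BanSN} at the level of characters, whence $\chi_t=\Pi_t(\chi)$ and $C(S_N^+)_0=C^*(\chi)$ is a commutative $C^*$-algebra; the inclusion $\mathrm{Sp}(\chi)\subseteq[0,N]$ holds since $\chi$ is a sum of $N$ projections, and $N\in\mathrm{Sp}(\chi)$ via the counit.

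The step you should not present as a proof is the sketched identification $[0,N]\subseteq\mathrm{Sp}(\chi)$ by ``transport'' from $O_N^+$. The substitution $\Pi_t(x)=A_{2t}(\sqrt{x})$ matches the fusion ring of $S_N^+$ with the even part of that of $O_n^+$ only for $n=\sqrt{N}$, which is a compact quantum group only when $N$ is a perfect square; and even where such a correspondence exists (e.g.\ via monoidal equivalence with a suitable $SO_q(3)$ for every $N\ge4$), an isomorphism of fusion semirings does not by itself carry over the spectrum of the character computed in the \emph{universal} $C^*$-algebra, which is exactly what is at stake (recall that in the reduced algebra the distribution of $\chi$ is supported on $[0,4]$ only, so the answer genuinely depends on which completion one works in). A real argument is needed here --- for instance quotient maps of $C(S_N^+)$ onto free products of smaller quantum permutation algebras, reducing to the spectrum of sums of free projections --- and this is what \cite{Bra12} supplies. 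Since you explicitly fall back on citing \cite{Bra12} for precisely this spectral identification, your proposal is acceptable and in fact records more detail than the paper does; just make sure the $O_N^+$ heuristic is clearly flagged as such rather than as a proof.
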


\begin{rque}
\begin{enumerate}
\item The recursion formula defining the polynomials $\Pi_t$ is the one satisfied by the irreducible characters $\chi_t$.
\item The polynomials $A_t$ and $\Pi_t$ are linked by the formula: $\Pi_t(x)=A_{2t}(\sqrt{x})$.
\end{enumerate}
\end{rque}

Before describing the fusion rules of $C(H_N^{s+})$, we recall that these compact quantum groups are free wreath products:
\begin{thm}\label{N23}\cite[Theorem 3.4]{BV09} Let $N\ge2$, then we have the following isomorphisms of compact quantum groups:
\begin{enumerate}
\item[-] $C(H_N^{s+})\simeq C(\Z_s)*_wC(S_N^+)=C^*(\Z_s^{*N})*C(S_N^+)/<[z_i,v_{ij}]=0>$ where $z_i$ is the generator of the $i$-th copy $\Z_s$ in the free product $\Z_s^{*N}$.
\item[-] In particular $C(H_2^{s+})\simeq C(\Z_s)*_wC(Z_2)$, $C(H_3^{s+})\simeq C(\Z_s)*_wC(S_3)$.
\end{enumerate}
\end{thm}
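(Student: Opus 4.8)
The statement is classical; the plan is to produce mutually inverse $*$-homomorphisms between the two universal $C^*$-algebras and to check that they intertwine the coproducts, so that together they give an isomorphism of compact quantum groups. Write $B := C(\Z_s)*_wC(S_N^+) = C^*(\Z_s^{*N})*C(S_N^+)/\langle[z_i,v_{ij}]=0\rangle$, where $z_i^s=1$, $(v_{ij})$ is the magic unitary of $C(S_N^+)$, and $z_iv_{ij}=v_{ij}z_i$ for all $i,j$; recall (from \cite{Bic04}) that its coproduct is determined by $\Delta(v_{ij})=\sum_k v_{ik}\otimes v_{kj}$ and $\Delta(z_i)=\sum_k z_iv_{ik}\otimes z_k$. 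First I would define $\Phi\colon C(H_N^{s+})\to B$ on generators by $\Phi(U_{ij})=z_iv_{ij}$. By the universal property of $C(H_N^{s+})$, for this to be well defined it suffices to check that the normal elements $z_iv_{ij}$ satisfy relations (1)--(4) of Definition \ref{ahs}. This is a direct computation using only that $(v_{ij})$ is a magic unitary -- so that in each row and in each column the entries are pairwise orthogonal projections summing to $1$ -- together with $z_iv_{ij}=v_{ij}z_i$ and $z_i^s=1$: one finds $(z_iv_{ij})(z_iv_{ij})^*=z_iv_{ij}z_i^*=v_{ij}$, a projection, and $(z_iv_{ij})^s=z_i^sv_{ij}^s=v_{ij}$; and the four families of identities expressing unitarity of $U$ and of ${}^tU$ collapse, once the $z$'s are moved past the entries of $v$ lying in their own row, to the row- and column-orthogonality of $v$.

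For the reverse arrow I would put $\tilde v_{ij}:=p_{ij}=U_{ij}U_{ij}^*$ and $\tilde z_i:=\sum_jU_{ij}$ in $C(H_N^{s+})$ and try to define $\Psi\colon B\to C(H_N^{s+})$ by $\Psi(v_{ij})=\tilde v_{ij}$, $\Psi(z_i)=\tilde z_i$. The work here is to verify the defining relations of $B$. The matrix $(p_{ij})$ is a magic unitary: relations (1) and (3) give $\sum_jp_{ij}=1=\sum_ip_{ij}$, and since a finite family of projections with sum $\le1$ is automatically pairwise orthogonal, the rows and the columns of $(p_{ij})$ consist of orthogonal projections. Next, from the normality of $U_{ij}$ and the fact that $U_{ij}U_{ij}^*$ is a projection one gets the identity $U_{ij}=p_{ij}U_{ij}=U_{ij}p_{ij}$, whence, using row-orthogonality of $(p_{ij})$, $U_{ij}U_{ik}^*=U_{ij}^*U_{ik}=U_{ij}U_{ik}=0$ for $j\ne k$. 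From these vanishings it follows that $\tilde z_i$ is unitary ($\tilde z_i\tilde z_i^*=\tilde z_i^*\tilde z_i=\sum_jp_{ij}=1$), that $\tilde z_i^s=\sum_jU_{ij}^s=\sum_jp_{ij}=1$, and that $\sum_lU_{il}p_{ij}=U_{ij}=\sum_lp_{ij}U_{il}$, i.e.\ $[\tilde z_i,\tilde v_{ij}]=0$; hence $\Psi$ is well defined.

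It then remains to see that $\Phi$ and $\Psi$ are mutually inverse, which is immediate on generators: $\Psi\Phi(U_{ij})=(\sum_lU_{il})p_{ij}=U_{ij}$, and $\Phi\Psi(v_{ij})=\Phi(p_{ij})=z_iv_{ij}z_i^*=v_{ij}$, $\Phi\Psi(z_i)=\sum_jz_iv_{ij}=z_i$. Finally $\Phi$ intertwines the coproducts: on generators $(\Phi\otimes\Phi)\Delta(U_{ij})=\sum_kz_iv_{ik}\otimes z_kv_{kj}$, which equals $\Delta(z_i)\Delta(v_{ij})=\Delta_B(\Phi(U_{ij}))$ after one use of the column-orthogonality $v_{ik}v_{il}=\delta_{kl}v_{ik}$. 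This establishes the first isomorphism, and the second part follows by specializing to $N=2$ and $N=3$ and invoking the classical facts $C(S_2^+)\cong C(\Z_2)$ and $C(S_3^+)\cong C(S_3)$ (for $N\le3$ the magic-unitary relations already force commutativity, so $S_N^+=S_N$).

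The only genuinely non-mechanical step I anticipate is the construction of $\Psi$: extracting from relations (1)--(4) that $(p_{ij})$ is a magic unitary and that $U_{ij}=p_{ij}U_{ij}$, which is exactly what turns $\sum_jU_{ij}$ into an order-$s$ unitary commuting with the $i$-th row. Everything else -- checking the relations for $\Phi$, verifying $\Psi\Phi=\mathrm{id}$ and $\Phi\Psi=\mathrm{id}$, and intertwining the coproducts -- is bookkeeping with the magic-unitary relations, the one point requiring care being to write down the coproduct on the free wreath product with the correct normalization.
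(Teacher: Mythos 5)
The paper does not prove this statement: it is imported verbatim from \cite[Theorem 3.4]{BV09}, so there is no in-paper argument to compare yours against. Your proposal is correct and is essentially the standard proof of that cited result: the two universal properties, the extraction of the magic unitary $(p_{ij})$ and of the order-$s$ unitaries $\sum_j U_{ij}$ from relations (1)--(4), and the verification that $\Phi$ intertwines the coproducts are all sound (the only blemishes are cosmetic: the column sums $\sum_i p_{ij}=1$ come from relation (2), not (3), and the identity $v_{ik}v_{il}=\delta_{kl}v_{ik}$ you invoke at the end is row-, not column-, orthogonality).
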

Let us now give the description of the irreducible corepresentations of $C(H_N^{s+})$. 
\begin{thm}\label{basic} \cite[Theorem 4.3, Corollary 6.4]{BV09}
$C(H_N^{s+})$ has a unique family of $N$-dimensional corepresentations (called basic corepresentations) $\{U_k: k\in\Z\}$, satisfying the following conditions:
\begin{enumerate}
\begin{minipage}[t]{0.4\linewidth}
\item $U_k=(U_{ij}^k)$ for any $k>0$.
\item $U_k=U_{k+s}$ for any $k\in\Z$.
\item $\overline{U}_k=U_{-k}$ for any $k\in\Z$.
\end{minipage}
\begin{minipage}[t]{0.4\linewidth}
\item $U_1,\dots,U_{s-1}$ are irreducible.
\item $U_0=1\oplus\rho_0$, $\rho_0$ irreducible.
\item $\rho_0,U_1,\dots,U_{s-1}$ are inequivalent corepresentations.
\end{minipage}
\end{enumerate} 
\end{thm}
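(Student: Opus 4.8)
The plan is to build the family $\{U_k\}_{k\in\Z}$ explicitly, settle the formal points (1)--(3) and uniqueness by hand, and then obtain (4)--(6) by restricting corepresentations along two surjective morphisms of compact quantum groups out of $H_N^{s+}$: the canonical $\pi\colon C(H_N^{s+})\to C(S_N^+)$ and a second one onto $C^*(\Z_s^{*N})$. For the construction, each $U_{ij}$ is normal with $U_{ij}^s=U_{ij}U_{ij}^*=p_{ij}$ a projection, so $\sigma(U_{ij})\subseteq\{0\}\cup\{\zeta:\zeta^s=1\}$ by the spectral theorem; hence $U_{ij}^{k+s}=U_{ij}^k$ for $k\ge1$ and $(U_{ij}^k)^*=U_{ij}^{s-k}$ for $1\le k\le s$, and putting $U_{ij}^0:=p_{ij}$ these identities hold for all $k\in\Z$. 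Set $U_k:=(U_{ij}^k)_{i,j}$. The defining relations force the $p_{ij}$ to be pairwise orthogonal along each row and each column, hence $U_{il}U_{il'}=U_{il}U_{il'}^*=0$ for $l\ne l'$ and $U_{il}^*U_{jl}=U_{jl}U_{il}^*=0$ for $i\ne j$ (all transparent through the identification $U_{ij}=z_iv_{ij}$ of Theorem \ref{N23}); expanding $\Delta(U_{ij}^k)=\bigl(\sum_l U_{il}\otimes U_{lj}\bigr)^k$ and using these relations, all cross terms vanish, giving $\Delta(U_{ij}^k)=\sum_l U_{il}^k\otimes U_{lj}^k$, while the same relations show $U_kU_k^*=U_k^*U_k=1$. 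Thus each $U_k$ is an $N$-dimensional unitary corepresentation and $\overline{U_k}=\bigl((U_{ij}^k)^*\bigr)=(U_{ij}^{s-k})=U_{-k}$, which is (1)--(3); uniqueness is immediate since (1) fixes $U_k$ for $k>0$, whence (2) forces $U_0=U_s=(p_{ij})$ and then every $U_k$.

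Now to (4)--(6). Besides $\pi$, which sends $U_{ij}\mapsto v_{ij}$ and hence ``kills the $z_i$'', let $\epsilon\colon C(S_N^+)\to\C$ be the counit $v_{ij}\mapsto\delta_{ij}$; the map $\mathrm{id}*\epsilon$ on the free product $C^*(\Z_s^{*N})*C(S_N^+)$ kills the relations $[z_i,v_{ij}]$, so it descends, through the isomorphism of Theorem \ref{N23}, to a surjective morphism of compact quantum groups $r\colon C(H_N^{s+})\to C^*(\Z_s^{*N})$ with $r(U_{ij})=\delta_{ij}z_i$ (one checks it intertwines the coproducts). Recall that along a surjective morphism of compact quantum groups, a corepresentation restricts to one of the same dimension and intertwiner spaces can only enlarge. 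Restricting along $\pi$, every $U_k$ restricts to $v\simeq 1\oplus v^{(1)}$ by Theorem \ref{BanSN}, so this restriction has endomorphism algebra $\mathrm{span}\{P,\mathrm{id}\}$, where $P$ is the projection onto the fixed line $\C(1,\dots,1)$, all of whose matrix entries equal $1/N$. Restricting along $r$, $U_k$ becomes $\mathrm{diag}(z_1^k,\dots,z_N^k)=\bigoplus_{i=1}^N(z_i^k)$, a sum of one-dimensional corepresentations; for $1\le k\le s-1$ the $z_i^k$ are pairwise distinct nontrivial elements of $\Z_s^{*N}$, so these summands are pairwise non-isomorphic, whence the restriction has endomorphism algebra the diagonal matrices, and for $1\le k<l\le s-1$ the intertwiner space between the restrictions of $U_k$ and $U_l$ is $\{0\}$ (no power of $z_i$ equals a power of $z_j$ for $i\ne j$, and $z_i^k\ne z_i^l$).

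The conclusions now follow. For $1\le k\le s-1$, $\mathrm{End}(U_k)$ embeds into $\mathrm{span}\{P,\mathrm{id}\}$ and into the diagonal matrices, hence into their intersection $\C\,\mathrm{id}$, since $aP+b\,\mathrm{id}$ is diagonal only when $a=0$; so $U_k$ is irreducible, which is (4). For $U_0=(p_{ij})$ the all-ones vector is fixed, and via $\pi$ the fixed vectors of $U_0$ embed into the one-dimensional space of fixed vectors of $v$, so $\dim\mathrm{Hom}(1,U_0)=1$ and $U_0=1\oplus\rho_0$ with $\dim\rho_0=N-1$; since $\rho_0$ restricts along $\pi$ to $v^{(1)}$, one has $\mathrm{End}(\rho_0)\subseteq\mathrm{End}_{S_N^+}(v^{(1)})=\C$, so $\rho_0$ is irreducible, which is (5). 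Finally $\rho_0\not\simeq U_k$ by dimension ($N-1\ne N$), while $U_k\not\simeq U_l$ for $1\le k<l\le s-1$ since $\mathrm{Hom}(U_k,U_l)$ embeds into the vanishing intertwiner space of the restrictions along $r$; this is (6). I expect the only delicate step to be the construction of $r$ --- verifying that $\mathrm{id}*\epsilon$ descends to the free wreath product and respects the coproduct; granting $r$ and $\pi$, everything else is the elementary observation that $\mathrm{End}(U_k)$ and the relevant $\mathrm{Hom}(U_k,U_l)$ are squeezed between the two restrictions, together with Banica's corepresentation theory of $S_N^+$ (Theorem \ref{BanSN}).
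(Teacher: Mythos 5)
Your proof is correct. Note first that the paper does not actually prove this statement: it is quoted from \cite{BV09}, and the author only indicates that (1)--(3) follow from the definitions while (4)--(6) rest on Woronowicz's Tannaka--Krein duality and the diagrammatic computation of intertwiner spaces carried out in \cite{BV09}. Your treatment of (1)--(3) matches that indication: the spectral argument $\sigma(U_{ij})\subseteq\{0\}\cup\{\zeta:\zeta^s=1\}$, the row/column orthogonality of the $p_{ij}$ (forced by $U$ and $^tU$ unitary with the $p_{ij}$ projections), and the resulting vanishing of cross terms in $\Delta(U_{ij})^k$ are exactly what is needed, and your uniqueness remark is right. For (4)--(6) you take a genuinely different and more elementary route: instead of computing $\mathrm{Hom}$-spaces via Tannaka--Krein, you squeeze $\mathrm{End}(U_k)$ and $\mathrm{Hom}(U_k,U_l)$ between their images under the two quotients $\pi\colon C(H_N^{s+})\to C(S_N^+)$ and $r\colon C(H_N^{s+})\to C^*(\Z_s^{*N})$, using that intertwiners pass to restrictions; the intersection $\mathrm{span}\{P,\mathrm{id}\}\cap\{\text{diagonal matrices}\}=\C\,\mathrm{id}$ then gives irreducibility, and the pairwise distinctness of the group elements $z_i^k$ in the free product gives inequivalence. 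This buys a short, self-contained proof of precisely the six assertions at the cost of invoking Banica's Theorem \ref{BanSN} (only the part $v\simeq 1\oplus v^{(1)}$ with both summands irreducible), whereas the \cite{BV09} approach is heavier but also yields the full fusion rules of Theorem \ref{corep2}, which your argument does not recover and which the rest of the paper needs. One simplification: the step you flag as delicate, the existence of $r$, does not require descending $\mathrm{id}*\epsilon$ through the free wreath product presentation at all --- the matrix $(\delta_{ij}z_i)$ visibly satisfies the four universal relations of Definition \ref{ahs} ($U$ and $^tU$ unitary, $\delta_{ij}z_iz_i^*=\delta_{ij}1$ a projection, $(\delta_{ij}z_i)^s=\delta_{ij}1$), and $(r\otimes r)\Delta(U_{ij})=\delta_{ij}\,z_i\otimes z_i=\Delta(r(U_{ij}))$ is a one-line check, so $r$ comes directly from the universal property.
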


\begin{nota}\label{basiks}
We will denote the basic irreducible corepresentations of $C(H_N^{s+})$ by $\rho_t, t\in\{0,\dots,s-1\}$, with $\rho_t=U_t\ \forall t\in\{1,\dots,s-1\}$ and $\rho_0=U_0\ominus 1$ (where $U_0=(U_{ij}U_{ij}^*)$).
\end{nota}

The proof of the first three assertions follows from the definitions of corepresentations of compact quantum groups and of the definition of $C(H_N^{s+})$. The proof of the last three assertions is based upon Woronowicz's Tannaka-Krein duality (see \cite{Wor88}) and methods inspired by \cite{Ban96}, \cite{Ban99} and \cite{BBCC11}. Now, we can give the description of the fusion rules:

\begin{thm}\label{corep2}\cite[Theorem 8.2]{BV09}
Let $M$ be the monoid $M=\langle a,z: z^s=1\rangle$ with involution $a^*=a,\ z^*=z^{-1}$, and the fusion rules obtained by recursion from the formulae 
\begin{equation}\label{monoidrelation}
vaz^i\otimes z^{j}aw=vaz^{i+j}aw\oplus\delta_{i+j,0}\left(v\otimes w\right)
\end{equation}

Then the irreducible corepresentations $r_{\alpha}$ of $C(H_N^{s+})$ can be indexed by the elements $\alpha$ of the submonoid $S$ generated by the elements $az^{i}a, i=0,\dots,s-1$, with involution and fusion rules above. 
\end{thm}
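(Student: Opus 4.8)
The plan is to derive the result from Woronowicz's Tannaka--Krein duality \cite{Wor88}, following the pattern of the computations of \cite{Ban96}, \cite{Ban99} for $S_N^+$ and of \cite{BBCC11} for the hyperoctahedral quantum group, and building on the basic corepresentations $U_k$ and their properties already recorded in Theorem~\ref{basic}. Since the fundamental corepresentation $U$ generates the tensor category $\mathrm{Corep}(H_N^{s+})$, every irreducible occurs inside an iterated tensor product of the basic corepresentations $U_k$, so it suffices to (i) describe the intertwiner spaces between such tensor products, (ii) decompose the tensor products into irreducibles, (iii) index the irreducibles so obtained by the elements $\alpha\in S$, and (iv) check that the induced multiplication is exactly (\ref{monoidrelation}). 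For step (i) I would first set up the combinatorial model: reasoning as in \cite{BBCC11}, the spaces $\mathrm{Hom}$ between tensor products of copies of $U$ and $\overline U$ are linearly spanned by the partition maps $T_p$, where $p$ runs over noncrossing partitions whose blocks obey the $\Z_s$--labelling constraint coming from the relation $U_{ij}^s=p_{ij}$ of Definition~\ref{ahs} (the signed sum of the labels in each block must vanish modulo $s$); for $N\ge4$ these maps are linearly independent, so $\dim\mathrm{Hom}$ equals exactly the number of admissible partitions.

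Next I would fix the dictionary between reduced words and corepresentations: the $s$ generators of $S$ correspond to the basic irreducibles, $az^ta\leftrightarrow\rho_t$ for $t=0,\dots,s-1$ (with $aa=az^0a\leftrightarrow\rho_0$), whereas the ``half--words'' $a$ and $z$ serve as formal bookkeeping symbols recording a passage through, respectively, the $S_N^+$--part and the $\Z_s$--part of the free wreath product $C(\Z_s)*_wC(S_N^+)$ of Theorem~\ref{N23}. The crucial point is then the decomposition of the tensor products of the $U_k$: a ``three--leg'' noncrossing partition produces an intertwiner realizing $U_{i+j}$ inside $U_i\otimes U_j$, the cap (duality) morphism contributes a trivial subrepresentation exactly when $\overline{U_i}\simeq U_j$ (i.e.\ when $i+j\equiv0\pmod s$), and an inspection of the remaining admissible noncrossing partitions shows that, read in terms of the letters $a$ and $z$, the complementary irreducibles are obtained by concatenating the corresponding words. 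Iterating this elementary step --- exactly as the $SO(3)$ fusion rules of $S_N^+$ are obtained by iterating the Temperley--Lieb recursion --- one recovers the recursion (\ref{monoidrelation}) at the level of characters; in particular the characters of the $r_\alpha$, $\alpha\in S$, span a multiplicatively closed subspace of $Pol(H_N^{s+})_0$ which already contains every irreducible character, so each irreducible corepresentation is equivalent to some $r_\alpha$.

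It then remains to run the usual simultaneous induction on the length of $\alpha\in S$, proving that \emph{$r_\alpha$ is irreducible}, that \emph{$r_\alpha\simeq r_\beta$ if and only if $\alpha=\beta$}, and that \emph{$r_\alpha\otimes r_\beta$ decomposes according to (\ref{monoidrelation}) with all multiplicities equal to $1$}. By Frobenius reciprocity together with (\ref{monoidrelation}) the inductive step reduces to computing the dimension of $\mathrm{Hom}$ between the relevant tensor products of basic corepresentations, which is a bijective count of admissible noncrossing partitions, the linear independence of the $T_p$ (valid for $N\ge4$) turning the a priori upper bounds into equalities. I expect the main obstacle to lie precisely in this combinatorial bookkeeping: propagating the $\Z_s$--labelling constraint consistently through the recursion, ruling out accidental extra intertwiners between the candidate irreducibles, and keeping the roles of the letters $a$ and $z$ correctly separated along the induction. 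Once the Hom--space dimensions have been pinned down, the irreducibility, the pairwise inequivalence and the fusion rules (\ref{monoidrelation}) all follow formally.
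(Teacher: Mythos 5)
This theorem is quoted verbatim from \cite[Theorem 8.2]{BV09}; the paper itself offers no proof of it, so there is no internal argument to compare yours against. Your outline --- Woronowicz's Tannaka--Krein duality, intertwiner spaces spanned by noncrossing partitions subject to the mod-$s$ block-labelling constraint coming from $U_{ij}^s=p_{ij}$, linear independence of the partition maps $T_p$ for $N\ge4$, and a simultaneous induction on the length of $\alpha$ combining Frobenius reciprocity with the recursion (\ref{monoidrelation}) --- is exactly the strategy of the original Banica--Vergnioux proof, and it is the one the present paper alludes to when it says that Theorem \ref{basic} rests on \cite{Wor88} and methods of \cite{Ban96}, \cite{Ban99}, \cite{BBCC11}. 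So the approach is the right one. That said, what you have written is a roadmap rather than a proof: the two substantive claims --- that the labelled noncrossing partition maps actually \emph{span} the Hom-spaces between tensor products of the $U_k$, and that the resulting bijective count of admissible partitions reproduces precisely the multiplicities predicted by (\ref{monoidrelation}), with no accidental extra intertwiners between the candidate irreducibles $r_\alpha$ --- are exactly the points you defer to ``combinatorial bookkeeping,'' and they constitute essentially all of the content of the original argument. If you intend this as a self-contained proof, those two steps must be carried out; as a reconstruction of the strategy behind the cited result, it is accurate.
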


\begin{rque}\label{notacorep}\
\begin{enumerate}
\item $S$ is composed of elements $a^{L_1}z^{J_1}\dots z^{J_{K-1}}a^{L_K}$ with 
\begin{enumerate}
\item[-] $J_i,L_i>0$ integers.
\item[-] $L_1$, $L_K$ odd integers and all the $L_i$'s$,\ i\in\{2,\dots,K-1\}$ even integers.
\item[-] Except if $K=1$, then $L_K$ is an even integer.
\end{enumerate}
\item With this description, we can identify the basic corepresentations introduced above: the corepresentation $r_{a^2}$ is the corepresentation $\rho_{{0}}=(U_{ij}U_{ij}^*)\ominus1$ and for $t\ne0$, $r_{az^ta}$ is the corepresentation $\rho_{{t}}=(U_{ij}^t)$.
\item In Proposition \ref{Image}, we will use the suggestive notation $$vaz^{i+j}aw=(vaz^i\otimes z^{j}aw)\ominus\delta_{i+j,0}(v\otimes w),$$
which simply means that we have the relation (\ref{monoidrelation}) in the monoid $S$. 
\item If $\alpha=a^{L_1}z^{J_1}\dots z^{J_{K-1}}a^{L_K}\in S$, then the conjugate corepresentation of $r_{\alpha}$ is indexed by $\overline{\alpha}=a^{L_K}z^{-J_{K-1}}\dots z^{-J_{1}}a^{L_1}$
\end{enumerate}
\end{rque}
We end this subsection by the following proposition which summarizes the results above:
\begin{prp}\label{imcorep}
The canonical morphism $\pi: C(H_N^{s+})\to C(S_N^+)$ maps all the corepresentations $U_t,t\in\Z$ onto the fundamental corepresentation $v$ of $C(S_N^+)$ ; in other words, it maps all $\rho_t=r_{az^ta}, t\ne0$ onto $v$ and $\rho_0=r_{a^2}$ onto $v^{(1)}$.
\end{prp}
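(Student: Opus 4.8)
The plan is to trace the generators through $\pi$ and then use functoriality of corepresentations together with the fusion-rule description to pin down the images of the basic corepresentations $\rho_t$. First I would observe that by Notation~\ref{pi} and Remark~\ref{as}(\ref{morphlater}), $\pi$ is defined by $\pi(U_{ij})=v_{ij}$, where $v=(v_{ij})$ is the magic unitary generating $C(S_N^+)$. Since $\pi$ is a morphism of compact quantum groups it intertwines the coproducts, so for each $k>0$ it maps the matrix $U_k=(U_{ij}^k)$ to $(\pi(U_{ij})^k)=(v_{ij}^k)=(v_{ij})=v$, because the $v_{ij}$ are projections. Likewise $\pi$ maps $U_0=(U_{ij}U_{ij}^*)$ to $(v_{ij}v_{ij}^*)=(v_{ij})=v$. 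Thus every basic $N$-dimensional corepresentation $U_k$, $k\in\Z$, is sent to $v$ (this already handles the assertion about the $U_t$'s).

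Next I would pass from $U_t$ to the irreducible pieces $\rho_t$. For $t\in\{1,\dots,s-1\}$ we have $\rho_t=U_t$ by Notation~\ref{basiks}, so $\pi(\rho_t)=v$, and the same computation over $\Z_s$-periodicity (Theorem~\ref{basic}(2)) gives $\pi(\rho_t)=v$ for all $t\neq 0$ in $\Z/s\Z$. For $t=0$, recall $U_0=1\oplus\rho_0$ as corepresentations of $C(H_N^{s+})$ (Theorem~\ref{basic}(5)); applying $\pi$ and using that $\pi$ sends the trivial corepresentation to the trivial one, we get $v\simeq\pi(U_0)\simeq 1\oplus\pi(\rho_0)$. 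By Theorem~\ref{BanSN}(1), $v\simeq 1\oplus v^{(1)}$ in $C(S_N^+)$; since the decomposition of $v$ into irreducibles is $1\oplus v^{(1)}$ with $v^{(1)}$ irreducible, and $\pi(\rho_0)$ is an $(N-1)$-dimensional subcorepresentation complementary to the trivial summand, uniqueness of the irreducible decomposition forces $\pi(\rho_0)\simeq v^{(1)}$.

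The main technical point to be careful about is that $\pi(\rho_0)$ need not be \emph{a priori} irreducible just because $\rho_0$ is; irreducibility can be lost under a quotient map. Here this causes no trouble because the target decomposition $v\simeq 1\oplus v^{(1)}$ has $v^{(1)}$ irreducible, so the complement of the trivial summand in $\pi(U_0)$ has no room to split further — but I would state this comparison explicitly rather than appeal to a general principle. With the images of the basic corepresentations identified, the identification $r_{az^ta}=\rho_t$ for $t\neq 0$ and $r_{a^2}=\rho_0$ from Remark~\ref{notacorep}(2) translates the statement about $\rho_t$ into the statement about the $r_\alpha$'s on the monoid generators, completing the proof. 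No appeal to the full fusion rules of Theorem~\ref{corep2} is needed for this particular proposition — only the structure of the basic corepresentations and their labels.
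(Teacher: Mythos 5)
Your argument is correct and is precisely the reasoning the paper leaves implicit (the proposition is stated there as a summary, with no written proof): $\pi(U_{ij})=v_{ij}$ forces $\pi(U_k)=v$ for all $k\in\Z$ since the $v_{ij}$ are projections, and cancelling the trivial summand in $1\oplus\pi(\rho_0)\simeq v\simeq 1\oplus v^{(1)}$ identifies $\pi(\rho_0)$ with $v^{(1)}$. Your explicit remark that irreducibility of $\pi(\rho_0)$ is not assumed but deduced from the uniqueness of the decomposition of $v$ is a welcome precision rather than a deviation.
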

\bigskip

\section{Characters of quantum reflection groups and quantum permutation groups}\label{corep}

As announced in the introduction, we find the images of the irreducible characters of $C(H_N^{S+})$ under the canonical morphism $\pi: C(H_N^{s+})\to C(S_N^+)$.

\begin{prp}\label{Image}
Let $\chi_\alpha$ be the character of an irreducible corepresentation $r_{\alpha}$ of $C(H_N^{s+})$. Write $\alpha=a^{l_1}z^{j_1}\dots z^{j_{k-1}}a^{l_k}.$ Then, identifying $C(S_N^+)_0$ with $C([0,N])$, the image of $\chi_{\alpha}$, say $P_{\alpha}$, satisfies:
$$P_{\alpha}(X^2)=\pi(\chi_{\alpha})(X^2)=\prod_{i=1}^kA_{l_i}(X).$$
\end{prp}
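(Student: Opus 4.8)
The plan is to compute $\pi(\chi_\alpha)$ by reducing to the basic corepresentations and then feeding in the fusion rules of Theorem \ref{corep2}. Since $\pi$ is a unital $*$-homomorphism intertwining the coproducts, it sends matrix coefficients of a corepresentation $u$ to matrix coefficients of a corepresentation $\pi(u)$ of $S_N^+$; hence $\pi(\chi_u)=\chi_{\pi(u)}$, $\pi(\chi_u\chi_{u'})=\pi(\chi_u)\pi(\chi_{u'})$ and $\pi(\chi_{u\oplus u'})=\pi(\chi_u)+\pi(\chi_{u'})$. First I would settle the basic corepresentations: by Proposition \ref{imcorep}, $\pi(\rho_0)=v^{(1)}$ and $\pi(\rho_t)=v$ for $t\neq0$, so under the identification $C(S_N^+)_0\simeq C([0,N])$ of Proposition \ref{branPi} together with the relation $\Pi_1(X^2)=A_2(X)$ one gets $P_{a^2}(X^2)=\Pi_1(X^2)=A_2(X)$ and $P_{az^ta}(X^2)=1+\Pi_1(X^2)=X^2=A_1(X)^2$. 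This is exactly the claimed identity for the two kinds of generators $a^2$ and $az^ta$ ($t\neq0$) of the monoid $S$, whose block data are $(l_1)=(2)$ and $(l_1,l_2)=(1,1)$.

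Then I would argue by induction on $n(\alpha):=\sum_{i=1}^{k}l_i$, which is always $\geq2$ with equality exactly for the generators. Assume $n(\alpha)\geq4$ and write $\alpha=a^{l_1}z^{j_1}\cdots z^{j_{k-1}}a^{l_k}$. After treating $k=1$ (i.e.\ $\alpha=a^{2m}$, using $r_{a^{2m-2}}\otimes r_{a^2}$) separately and, if necessary, replacing $\alpha$ by its conjugate $\bar\alpha$ (same block multiset, and $P_{\bar\alpha}=P_\alpha$ as $P_\alpha$ is real), one may fix an index $2\le i\le k$ with $l_i\geq2$. Split $\alpha=\mu\nu$ with $\mu=a^{l_1}z^{j_1}\cdots z^{j_{i-1}}a$ and $\nu=a^{l_i-1}z^{j_i}\cdots a^{l_k}$: both lie in $S$ (the parity constraints work out) and satisfy $n(\cdot)<n(\alpha)$. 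Writing $\mu=(\mu\!\setminus\! a)\,a$, $\nu=a\,(\nu\!\setminus\! a)$ and using (\ref{monoidrelation}) in the $\ominus$-form of Remark \ref{notacorep}(3) gives $r_\alpha=(r_\mu\otimes r_\nu)\ominus(r_{\mu\setminus a}\otimes r_{\nu\setminus a})$; iterating (\ref{monoidrelation}) on the correction term — the interior exponents $j_1,\dots,j_{k-1}$ being nonzero, a $\delta$-factor survives only when two adjacent such exponents sum to $0$ — rewrites it as a sum of classes $r_\gamma$ with $n(\gamma)<n(\alpha)$: namely $\alpha$ with block $i$ shrunk by $2$, or, when $l_i=2$, $\alpha$ with block $i$ deleted and the neighbouring $z$-factors (and possibly the neighbouring $a$-blocks) merged, plus in that last case one further term $r_{\gamma'}\otimes r_{\gamma''}$ with $n(\gamma')+n(\gamma'')=n(\alpha)-4$. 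Applying $\pi$ and the induction hypothesis then turns the claim into a polynomial identity in the $A_t$'s.

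After cancelling the factors common to all terms, that identity reads $A_1A_{l_i-1}=A_{l_i}+A_{l_i-2}$ in the generic case — this is the defining recursion (\ref{recurform}) — while in the merging case $l_i=2$ it reduces to $A_pA_q=A_{p+q}+A_{p-1}A_{q-1}$ with $p=l_{i-1}$, $q=l_{i+1}$, which is precisely Proposition \ref{ChebRec}; so the induction closes and $P_\alpha(X^2)=\prod_{i}A_{l_i}(X)$. The delicate part is not the algebra but the combinatorial bookkeeping: tracking how the block-and-parity structure of $\alpha$ transforms under each application of (\ref{monoidrelation}) (shrinking, deletion, merging of blocks), checking that every corepresentation produced has strictly smaller $n(\cdot)$, and dealing with the intermediate words of $M$ that do not belong to $S$ and hence do not index corepresentations. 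One can also organise the whole computation conceptually by defining a ring homomorphism from the fusion ring of $M$ (which contains $\mathrm{Rep}(H_N^{s+})$ as the sub-based-ring on $S$) to $\mathbb{C}[X]$ by $a\mapsto A_1(X)=X$, $z\mapsto1$ — well-definedness being exactly (\ref{recurform}) and Proposition \ref{ChebRec} — and noting that, all interior exponents of $\alpha$ being nonzero, no correction arises when $r_\alpha$ is expanded into its blocks, each block $a^l$ contributing $A_l(X)$.
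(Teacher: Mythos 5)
Your proposal is correct and follows essentially the same route as the paper: induction on $\sum_i l_i$, base cases $a^2$ and $az^ta$ from Proposition \ref{imcorep} and Proposition \ref{branPi}, a splitting of $\alpha$ via the fusion rule (\ref{monoidrelation}) in its $\ominus$-form, and closure of the induction through the recursion (\ref{recurform}) and Proposition \ref{ChebRec}. The only difference is organizational — you split at an interior-or-last block with $l_i\ge2$ (after conjugating if needed, which is legitimate since all characters of $S_N^+$ are self-adjoint), whereas the paper always peels at the right end and enumerates the cases $K=1,2,\ge3$ and $L_K=1,3,\ge5$; your bookkeeping of the correction terms (shrinking, deletion, merging, and the extra $n(\alpha)-4$ term when $j_{i-1}+j_i\equiv0$) matches the paper's case analysis.
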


\begin{proof}
We shall prove this proposition by induction on the even integer $\sum_{i=1}^kl_i$ using the description of the fusion rules given by Theorem \ref{corep2}, the recursion formula satisfied by the Tchebyshev polynomials, Proposition \ref{ChebRec} and Proposition \ref{imcorep}.
 
Let HR($\lambda$) be the following statement: $\pi(\chi_{\alpha})(X^2)=\prod_{i=1}^kA_{l_i}(X)$ for any $\alpha=a^{l_1}z^{j_1}\dots z^{j_{k-1}}a^{l_k}$ such that $2\le \sum_{i}l_i\le \lambda$.

Let us begin by studying simple examples (and initializing the induction).

Consider the element $aza$. Then, the irreducible corepresentation $r_{aza}$ (written $\rho_{1}$ in Notation \ref{basiks}) is sent by $\pi$ onto $v=1\oplus v^{(1)}$ by Proposition \ref{imcorep}. Thus, in term of characters, we obtain by Proposition \ref{branPi} $$\pi(\chi_{aza})(X)=1+(X-1)=X=A_1(X)$$ i.e. $$P_{aza}(X^2)=X^2=A_1(X)A_1(X).$$ Actually, this holds for all elements $\alpha=az^ja,\ j\in\{1,\dots,s-1\}$ (since every irreducible corepresentation $r_{az^ja}$ is sent by $\pi$ onto $1\oplus v^{(1)}$, as is $r_{aza}$).  

Consider the element $a^2$. Then, the irreducible corepresentation $r_{a^2}$ (written $\rho_0$ in Notation \ref{basiks}) is sent by $\pi$ onto $v^{(1)}$. Thus $\pi(\chi_{a^2})(X)=X-1$. i.e. $$P_{a^2}(X^2)=X^2-1=A_2(X).$$

To prove HR($2$) one has to show that 
$\pi(\chi_{a^2})(X^2)=A_2(X)$ and $\pi(\chi_{az^{j}a})(X^2)=A_1A_1(X)$
for all $j\in\{1,\dots,s-1\}$, what we have just done above.

Now assume HR($\lambda$) holds: $\pi(\chi_{\beta})(X^2)=\prod_{i=1}^kA_{l_i}(X)$ for any $\beta=a^{l_1}z^{j_1}\dots z^{j_{k-1}}a^{l_k}$ such that $2\le \sum_{i}l_i\le \lambda$. We now show HR($\lambda+2$).

Let $\alpha=a^{L_1}z^{J_1}\dots a^{L_K}$, with $\sum_iL_i=\lambda+2$. In order to use HR($\lambda$), we must ``break'' $\alpha$ using the fusion rules as in the examples above. 
Then, essentially, one has to distinguish the cases $L_K=1, L_K=3$ and $L_K\ge5$ (in the case $L_K\ge5$ we can ``break $\alpha$ at $a^{L_{K}}$" but in the other cases we must use $a^{L_{K-1}}$ or $a^{L_{K-2}}$ if they exist, that is if there are enough factors $a^{L_i}$). So first, we deal with two special cases below, in order to have ``enough'' factors $a^L$ in $\alpha$ in the sequel. We use the fusion rules described in Theorem \ref{corep2} (and the notations described after, see Remark \ref{notacorep}).

\begin{enumerate}

\item[-] If $K=1$ i.e. $L_K=\lambda+2,\ J_i=0\ \forall i$, write: $$\alpha=a^{\lambda+2}=(a^{\lambda}\otimes a^2)\ominus (a^{\lambda-1}\otimes a)=(a^{\lambda}\otimes a^2)\ominus a^{\lambda}\ominus a^{\lambda-2}.$$ Then using the hypothesis of induction and Proposition \ref{ChebRec}, we get 

\begin{align*}
\pi(\chi_{\alpha})(X^2)&=A_{\lambda}A_2(X)-A_{\lambda}(X)-A_{\lambda-2}(X)\\
&=A_{\lambda}A_2(X)-(A_{\lambda}(X)+A_{\lambda-2}(X))\\
&=A_{\lambda}A_2(X)-A_{\lambda-1}A_1(X)\\
&=A_{\lambda+2}(X).
\end{align*}
(Notice that if $\lambda=2$ one has $\lambda-2=0$ and $a^4=(a^2\otimes a^2)\ominus (a\otimes a)=(a^2\otimes a^2)\ominus a^2\ominus 1$ so that the result we want to prove then is still true.)

\item[-] If $K=2, J:=J_1\ne0$, write $\alpha=a^{L_1}z^{J}a^{L_2}$. We have $L_1+L_2= \lambda+2\ge4$ and $L_1,L_2$ are odd hence $L_1$ or $L_2\ge3$, say $L_1\ge3$. Write $$a^{L_1}z^{J}a^{L_2}=(a^2\otimes a^{L_1-2}z^Ja^{L_2})\ominus (a\otimes a^{L_1-3}z^Ja^{L_2}).$$

If $L_1=3$ then the tensor product $a\otimes a^{L_1-3}z^Ja^{L_2}$ is equal to $az^Ja^{L_2}$ hence $\alpha=a^3z^Ja^{L_2}$ satisfies 
\begin{align*}
\pi(\chi_{\alpha})(X^2)&=A_2A_{1}A_{L_2}(X)-A_1A_{L_2}(X)\\
&=A_3(X)A_{L_2}(X).
\end{align*}

If $L_1>3$ (i.e. $L_1\ge5$), then the tensor product $a\otimes a^{L_1-3}z^Ja^{L_2}$ is equal to $a^{L_1-2}z^Ja^{L_2}\oplus a^{L_1-4}z^Ja^{L_2}$. We get
\begin{align*}
\pi(\chi_{\alpha})(X^2)&=A_2A_{L_1-2}A_{L_2}(X)-A_{L_1-2}A_{L_2}(X)-A_{L_1-4}A_{L_2}(X)\\
&=A_{L_1}(X)A_{L_2}(X).
\end{align*}

\item[-] From now on, we suppose that there are more than three factors $a^{L_i}$ in $\alpha$ i.e. $K\ge3$. We will have to distinguish three cases: $L_K=1, L_K=3$ and $L_K\ge5$.

If $5\le L_K<\sum_iL_i$, write $L_K=m_K+2$. Then we have $m_K\ge3$, so 

\begin{align*}
a^{L_1}z^{J_1}\dots a^{L_K}&=a^{L_1}z^{J_1}\dots a^{m_K+2}\\
&=(a^{L_1}z^{J_1}\dots a^{m_K}\otimes a^2)\ominus (a^{L_1}z^{J_1}\dots a^{m_K-1}\otimes a)\\
&=(a^{L_1}z^{J_1}\dots a^{m_K}\otimes a^2)\ominus a^{L_1}z^{J_1}\dots a^{m_K}\ominus a^{L_1}z^{J_1}\dots a^{m_K-2}.
\end{align*}
Then 
\begin{align*}
\pi(\chi_{\alpha})(X^2)&=A_{L_1}\dots A_{L_{K-1}}A_{m_k}A_2(X)-A_{L_1}\dots A_{m_K}(X)-A_{L_1}\dots A_{m_K-2}(X)\\
&=A_{L_1}\dots A_{L_{K-1}}A_{L_K}(X).\\
\end{align*}

If $m_K=1$, i.e. $L_K=3$, we proceed in the same way using

\begin{align*}
a^{L_1}z^{J_1}\dots z^{J_{K-1}}a^3=(a^{L_1}z^{J_1}\dots a\otimes a^2) \ominus a^{L_1}z^{J_1}\dots z^{J_{K-1}}a.
\end{align*}

To conclude the induction, one has to deal with the case $L_K=1$. We have to distinguish the following cases:

If $L_{K-1}\ge4$. We have

\begin{align*}
a^{L_1}z^{J_1}\dots a^{L_{K-1}}z^{J_{K-1}}a&=(a^{L_1}z^{J_1}\dots a^{L_{K-1}-1}\otimes az^{J_{K-1}}a) \ominus (a^{L_1}z^{J_1}\dots a^{L_{K-1}-2}\otimes z^{J_{K-1}}a)\\
&=(a^{L_1}z^{J_1}\dots a^{L_{K-1}-1}\otimes az^{J_{K-1}}a) \ominus a^{L_1}z^{J_1}\dots a^{L_{K-1}-2}z^{J_{K-1}}a.
\end{align*}
Then
\begin{align*}
\pi(\chi_{\alpha})(X^2)&=A_{L_1}\dots A_{L_{K-1}-1}A_{1}A_1(X)-A_{L_1}\dots A_{L_{K-1}-2}A_1(X)\\
&=A_{L_1}\dots A_{L_{K-1}}A_1(X).\\
\end{align*}

If $L_{K-1}=2$ and ${J_{K-1}}+{J_{K-2}}=0 \text{\ mod\ } s$, we can proceed in the same way using 

\begin{align*}
a^{L_1}z^{J_1}\dots &a^{L_{K-2}}z^{J_{K-2}}a^2z^{J_{K-1}}a\\
&=(a^{L_1}z^{J_1}\dots a^{L_{K-2}}z^{J_{K-2}}a\otimes az^{J_{K-1}}a)\ominus a^{L_1}z^{J_1}\dots a^{L_{K-2}+1}\ominus a^{L_1}z^{J_1}\dots a^{L_{K-2}-1}.\\
\end{align*}

The last case to deal with is $L_{K-1}=2$ and ${J_{K-1}}+{J_{K-2}}\ne0 \text{\ mod\ } s$, and again we can conclude thanks to

\begin{align*}
a^{L_1}z^{J_1}\dots z^{J_{K-2}}a^2z^{J_{K-1}}a&=(a^{L_1}\dots a^{L_{K-2}}z^{J_{K-2}}a\otimes az^{J_{K-1}}a) \ominus a^{L_1}z^{J_1}\dots a^{L_{K-2}}z^{J_{K-2}+J_{K-1}}a.\\
\end{align*}
\end{enumerate}
\end{proof}

As a corollary, we can get the result also proved in \cite{BV09} (see Theorem 9.3):

\begin{crl}\label{crlsurj}
Let $r_{\alpha}$ be an irreducible corepresentation of $C(H_N^{s+})$ with $\alpha=a^{l_1}z^{j_1}\dots a^{l_k}$. Then $$\emph{dim}(r_{\alpha})=\prod_{i=1}^kA_{l_i}(\sqrt{N}).$$
\end{crl}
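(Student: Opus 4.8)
The plan is to deduce the dimension formula from Proposition \ref{Image} by evaluating the polynomial identity at a suitable point. Recall that Proposition \ref{branPi} gives a $*$-isomorphism $C(S_N^+)_0 \simeq C([0,N])$ under which $\chi_t$ becomes $\Pi_t$; in particular, the identification sends the fundamental character $\chi$ of $C(S_N^+)$ to the coordinate function $X$ on $[0,N]$. Under this identification, evaluation at a point $x_0 \in [0,N]$ is a character of $C(S_N^+)_0$, and the distinguished point to use is $x_0 = N$: indeed, the counit $\varepsilon$ of $C(S_N^+)$ sends $\chi_t$ to $\dim(v^{(t)}) = d_t$, and restricted to $C(S_N^+)_0$ it is precisely the evaluation-at-$N$ character (since $\varepsilon(\chi) = \dim v = N$, and the $\Pi_t$ satisfy the same recursion as the $\chi_t$). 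So applying $\varepsilon$ to both sides of the identity in Proposition \ref{Image} will produce dimensions on the left.

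The key steps, in order, are as follows. First I would note that the counit $\varepsilon_H$ of $C(H_N^{s+})$ satisfies $\varepsilon_H(\chi_\alpha) = \dim(r_\alpha)$ for every irreducible corepresentation $r_\alpha$, simply because $\varepsilon_H(u_{ij}) = \delta_{ij}$ for the generators and hence $\varepsilon_H(\chi_\alpha) = \sum_i \varepsilon_H((r_\alpha)_{ii}) = \dim(r_\alpha)$. Second, I would use that $\varepsilon_H$ factors through $\pi$: since $\pi: C(H_N^{s+}) \to C(S_N^+)$ is a morphism of compact quantum groups, it intertwines the counits, i.e. $\varepsilon_S \circ \pi = \varepsilon_H$. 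Therefore $\dim(r_\alpha) = \varepsilon_H(\chi_\alpha) = \varepsilon_S(\pi(\chi_\alpha)) = \varepsilon_S(P_\alpha(X^2))$, where $P_\alpha(X^2) = \pi(\chi_\alpha)$ lies in $C(S_N^+)_0 \simeq C([0,N])$. Third, I would identify $\varepsilon_S$ restricted to $C(S_N^+)_0$ with evaluation at $N \in [0,N]$: under the isomorphism of Proposition \ref{branPi}, $\varepsilon_S(\Pi_t) = \varepsilon_S(\chi_t) = d_t$, and since the polynomials $\Pi_t$ are determined by the recursion $\Pi_1\Pi_t = \Pi_{t+1} + \Pi_t + \Pi_{t-1}$ with $\Pi_0 = 1$, $\Pi_1 = X-1$, while the dimensions satisfy $d_1 d_t = d_{t+1} + d_t + d_{t-1}$ with $d_0 = 1$, $d_1 = N-1$, one gets $\Pi_t(N) = d_t$ for all $t$; hence $\varepsilon_S(f) = f(N)$ for $f \in C([0,N])$ a polynomial, and by density for all $f$. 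Fourth, I would simply substitute: by Proposition \ref{Image}, $P_\alpha(X^2) = \prod_{i=1}^k A_{l_i}(X)$ as functions, so evaluating the left-hand side at the point of $[0,N]$ corresponding to $X = \sqrt N$ (equivalently $X^2 = N$) gives $\dim(r_\alpha) = \varepsilon_S(P_\alpha(X^2)) = \prod_{i=1}^k A_{l_i}(\sqrt N)$, which is the claim.

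The only genuinely delicate point is the bookkeeping of \emph{which} variable is being evaluated where: Proposition \ref{Image} is phrased as an identity $P_\alpha(X^2) = \prod A_{l_i}(X)$, and one must be careful that the element $\pi(\chi_\alpha) \in C(S_N^+)_0$, viewed in $C([0,N])$ via $\chi \mapsto X$, is $P_\alpha$ evaluated at the coordinate function; since $\varepsilon_S(\chi) = N$ this coordinate function takes the value $N$ under $\varepsilon_S$, so the right-hand side $\prod A_{l_i}(X)$ gets evaluated with "$X$" set to $\sqrt N$. Everything else is routine: the compatibility of $\pi$ with counits, and the elementary induction comparing the recursions for $\Pi_t$ and $d_t$. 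Alternatively, and perhaps more cleanly, one can bypass $\varepsilon_S$ entirely by recalling (from Theorem \ref{BanSN} and Proposition \ref{branPi}) that $d_t = \Pi_t(N)$ is already known, and that $\dim$ is multiplicative and additive with respect to $\otimes$ and $\oplus$; then one reruns the very induction in the proof of Proposition \ref{Image} with $\dim(r_\alpha)$ in place of $\pi(\chi_\alpha)$, using $\dim(r_{az^ta}) = N = A_1(\sqrt N)^2$ and $\dim(r_{a^2}) = N-1 = A_2(\sqrt N)$ as base cases and Proposition \ref{ChebRec} for the inductive step — but the counit argument is shorter and I would present that.
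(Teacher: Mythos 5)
Your proposal is correct and follows essentially the same route as the paper: write $\dim(r_\alpha)=\epsilon_{C(H_N^{s+})}(\chi_\alpha)=\epsilon_{C(S_N^+)}(\pi(\chi_\alpha))$ using compatibility of $\pi$ with the counits, identify $\epsilon_{C(S_N^+)}$ on $C(S_N^+)_0\simeq C([0,N])$ with evaluation at $N$ via $\epsilon(\Pi_t)=\Pi_t(N)=d_t$, and then evaluate the formula of Proposition \ref{Image}. The bookkeeping of the substitution $X=\sqrt{N}$ and the verification that evaluation at $N$ is the counit are handled exactly as in the paper's proof.
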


\begin{proof}
We have $\text{dim}(r_{\alpha})=\epsilon_{C(H_N^{s+})}(\chi_{\alpha})=\epsilon_{C(S_N^+)}\circ\pi(\chi_{\alpha})$ since $\pi$ is a morphism of Hopf algebras. But the counit on $C(S_N^+)_0$ is given by the evaluation in $N$. Indeed, an immediate corollary of Theorem \ref{BanSN} and Proposition \ref{branPi}, is $\epsilon(\Pi_t)=\Pi_t(N)$ for all polynomials $\Pi_t$, which form a basis of $\R[X]$. Now by the previous proposition $\pi(\chi_{\alpha})(x)=\prod_{i=1}^kA_{l_i}(\sqrt{x})$, then $\epsilon_{C(S_N^+)}\circ\pi(\chi_{\alpha})=\prod_{i=1}^kA_{l_i}(\sqrt{N})$.
\end{proof}
\bigskip

\section{Haagerup property for quantum reflection groups}\label{HAPHNS}
In this section we show that duals of the quantum reflection groups $C(H_N^{s+})=C(\Z_s)*_wC(S_N^+),\ s\ge1$ have the Haagerup property for $N\ge4$.

We still denote by $\pi$ the canonical surjection $\pi: C(H_N^{s+})\to C(S_N^+)$ and by $\psi_x=ev_x$ the states on $C(S_N^+)_0\simeq C([0,N])$ used to show that $C(S_N^+)$ have the Haagerup property (see \cite{Bra12}). Essentially, we are going to use both morphisms $\pi, \psi_x$ in this way: we can define states $\phi_x$ composing  these maps, $\psi_x\circ\pi$, where $\pi$ sends characters of $C(H_N^{s+})$ on characters of $C(S_N^+)$. Thus, we obtain states on the central algebra $C(H_N^{s+})_0$ and, after checking that these states have some decreasing properties, we can use the Theorem \ref{fond} and conclude.

\begin{lem}
Let $\psi_x,\ x\in[0,N]$ be the states given by the evaluation in $x$ on the central $C^*$-algebra $C(S_N^+)_0$. Then for all $x\in[0,N]$, $\phi_x=\psi_x\circ\pi$ is a state on $C(H_N^{s+})_0$.
\end{lem}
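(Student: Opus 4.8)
The plan is to show that $\phi_x = \psi_x \circ \pi$ is a state on $C(H_N^{s+})_0$ by verifying it is a well-defined, positive, unital linear functional of norm one. The essential point is that $\pi$ restricts to a $*$-homomorphism from $C(H_N^{s+})_0$ into $C(S_N^+)_0$, and then composition with the state $\psi_x$ automatically yields a state.

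First I would check that $\pi$ maps $C(H_N^{s+})_0$ into $C(S_N^+)_0$. By definition $C(H_N^{s+})_0$ is the $C^*$-subalgebra of $C(H_N^{s+})$ generated by the irreducible characters $\chi_\alpha$, $\alpha \in S$. By Proposition \ref{Image}, for each such $\alpha = a^{l_1}z^{j_1}\cdots a^{l_k}$ we have $\pi(\chi_\alpha) = P_\alpha$ where $P_\alpha(X^2) = \prod_{i=1}^k A_{l_i}(X)$; in particular $\pi(\chi_\alpha)$ lies in $C(S_N^+)_0 \cong C([0,N])$. Since $\pi$ is a $*$-homomorphism and $C(H_N^{s+})_0 = \overline{\mathrm{span}}\{\chi_\alpha : \alpha \in S\}$ (as a $C^*$-algebra it is generated by these self-adjoint-up-to-conjugation elements, and $\chi_{\overline\alpha} = \chi_\alpha^*$ by Remark \ref{notacorep}), continuity of $\pi$ forces $\pi(C(H_N^{s+})_0) \subseteq C(S_N^+)_0$. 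Thus $\pi|_{C(H_N^{s+})_0} : C(H_N^{s+})_0 \to C(S_N^+)_0$ is a well-defined $*$-homomorphism.

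Next I would invoke the standard fact that the composition of a $*$-homomorphism between $C^*$-algebras with a state is again a state, provided unitality is checked. Here $\psi_x = ev_x$ is a state on $C(S_N^+)_0 \cong C([0,N])$ (evaluation at a point of a compact Hausdorff space is a character, hence a state). Since $\pi$ is a morphism of compact quantum groups it is unital, so $\pi(1) = 1$ and hence $\phi_x(1) = \psi_x(1) = 1$. For positivity: if $b \in C(H_N^{s+})_0$ with $b \ge 0$, then $\pi(b) \ge 0$ in $C(S_N^+)_0$ because $*$-homomorphisms preserve positivity, and so $\phi_x(b) = \psi_x(\pi(b)) \ge 0$. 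Linearity is immediate. Therefore $\phi_x$ is a unital positive linear functional, i.e. a state, on $C(H_N^{s+})_0$.

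There is essentially no serious obstacle here; the only subtlety worth spelling out is making sure $\pi$ genuinely carries the character subalgebra into the character subalgebra, which is exactly the content provided by Proposition \ref{Image} (together with Proposition \ref{imcorep} for the basic corepresentations). One small bookkeeping point is to note that $C(H_N^{s+})_0$ is closed under the adjoint and contains $1 = \chi_{r_\emptyset}$ (the character of the trivial corepresentation), so that it is a genuine unital $C^*$-subalgebra and the notion of ``state'' applies verbatim; this follows from the fusion rules, since the trivial corepresentation appears in $r_\alpha \otimes r_{\overline\alpha}$ and conjugation acts on $S$ as described in Remark \ref{notacorep}. Once these identifications are in place, the lemma is a one-line consequence of functoriality of states under $*$-homomorphisms.
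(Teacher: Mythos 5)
Your proof is correct and follows essentially the same route as the paper, which simply observes that $\pi$ is a Hopf $*$-homomorphism and therefore carries $C(H_N^{s+})_0$ into $C(S_N^+)_0$, so that $\psi_x\circ\pi$ is a state. Your version merely spells out the details (using Proposition \ref{Image} to see that characters land in the character algebra, and the standard fact that a state composed with a unital $*$-homomorphism is a state), which the paper leaves implicit.
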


\begin{proof}
One just has to note that $\pi$ is Hopf $*$-homomorphism and hence sends $C(H_N^{s+})_0$ to $C(S_N^+)_0$. Then $\psi_x\circ\pi$ is indeed a functional on $C(H_N^{s+})_0$. The rest is clear.
\end{proof}

\begin{nota}\label{notac0mon}
We introduce a proper function on the monoid $S$ (see Theorem \ref{corep2}). Let $L$ be defined by $L(\alpha)=\sum_{i=1}^{k_{\alpha}}l_i$ for $\alpha=a^{l_1}z^{j_1}\dots a^{l_{k_{\alpha}}}$. Notice that for all $R>0$ the set $B_R=\left\{\alpha=a^{l_1}z^{j_1}\dots a^{l_{k_{\alpha}}}: L(\alpha)=\sum_{i=1}^{k_{\alpha}}l_i\le R\right\}\subset S$ is finite. Thus we get that a net $(f_{\alpha})_{\alpha\in S}$ belongs to $c_0(S)\Longleftrightarrow\forall\epsilon>0\ \exists R>0: \forall\alpha\in S, (L(\alpha)>R\Rightarrow|f_{\alpha}|<\epsilon)$. We say that a net $(f_{\alpha})_{\alpha}$ converges to $0$ as $\alpha\to\infty$ if $(f_{\alpha})_{\alpha}\in c_0(S)$.
\end{nota}

\begin{prp}\label{c05}
Let $N\ge5$ and let $\chi_{\alpha}$ be an irreducible character of $C(H_N^{s+})$ associated to the irreducible corepresentation $r_{\alpha}$ with ${\alpha}=a^{l_1}z^{j_1}a^{l_2}\dots a^{l_{k_{\alpha}}}$. Then for all $x\in [0,N]$ 
$$C_{\alpha}(x):=\frac{\phi_x(\chi_{\alpha})}{\emph{dim}(r_{\alpha})}=\frac{\psi_x\circ\pi(\chi_{\alpha})(X)}{\emph{dim}(r_{\alpha})}=\prod_{i=1}^{k_{\alpha}}\frac{A_{l_i}(\sqrt{x})}{A_{l_i}(\sqrt{N})}.$$
Moreover $C_{\alpha}(x)$ converges to $0$ as $\alpha\to\infty$ for all $x\in[4,N)$.
\end{prp}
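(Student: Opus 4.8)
The plan is to establish the explicit product formula first, then derive the asymptotic decay from it using the Tchebyshev estimates of Subsection \ref{ao}.

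For the formula, I would combine Proposition \ref{Image} and Corollary \ref{crlsurj} directly. By definition $\phi_x=\psi_x\circ\pi=ev_x\circ\pi$, and Proposition \ref{Image} says that under the identification $C(S_N^+)_0\simeq C([0,N])$ the image $\pi(\chi_\alpha)$ is the polynomial $P_\alpha$ with $P_\alpha(X^2)=\prod_{i=1}^{k_\alpha}A_{l_i}(X)$. Evaluating at $x\in[0,N]$ gives $\phi_x(\chi_\alpha)=P_\alpha(x)=\prod_{i=1}^{k_\alpha}A_{l_i}(\sqrt{x})$. Similarly $\mathrm{dim}(r_\alpha)=\prod_{i=1}^{k_\alpha}A_{l_i}(\sqrt N)$ by Corollary \ref{crlsurj}. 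Dividing yields $C_\alpha(x)=\prod_{i=1}^{k_\alpha}\frac{A_{l_i}(\sqrt x)}{A_{l_i}(\sqrt N)}$, which is the asserted identity. One should note that each $A_{l_i}(\sqrt N)>0$ since $\sqrt N>\sqrt 5>2$ and the $A_t$ are positive on $(2,\infty)$, so the quotient is well-defined.

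For the decay as $\alpha\to\infty$ with $x\in[4,N)$, recall from Notation \ref{notac0mon} that $\alpha\to\infty$ means $L(\alpha)=\sum_{i=1}^{k_\alpha}l_i\to\infty$. Fix such an $x$; then $\sqrt x\in[2,\sqrt N)$. If $\sqrt x=2$ (i.e. $x=4$) one treats the factors separately: Proposition \ref{propN=4} gives a uniform bound $\left|\frac{A_t(2)}{A_t(\sqrt N)}\right|$ — actually here we need $\frac{A_t(2)}{A_t(\sqrt N)}$ with $\sqrt N>2$, so Proposition \ref{MajCheb} applies with $(x,N)\rightsquigarrow(2,\sqrt N)$ (legitimate since $2\in(2,\sqrt N)$ fails at the endpoint — so instead use that $\frac{A_t(2)}{A_t(\sqrt N)}\le\frac{A_3(2)}{A_3(\sqrt N)}=:\delta<1$ for $t\ge1$, because $t\mapsto\frac{A_t(2)}{A_t(\sqrt N)}$ is nonincreasing, a fact one checks from $A_{t+1}=A_1A_t-A_{t-1}$, or more simply each factor is $<1$ and bounded away from $1$). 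For $x\in(4,N)$, $\sqrt x\in(2,\sqrt N)$ and Proposition \ref{MajCheb} applied with ambient interval $(2,\sqrt N)$ gives a constant $c=c(x,N)\in(0,1)$ with $0<\frac{A_{l_i}(\sqrt x)}{A_{l_i}(\sqrt N)}\le\left(\frac{\sqrt x}{\sqrt N}\right)^{c\,l_i}=\left(\frac{x}{N}\right)^{c\,l_i/2}$ for each $l_i\ge1$. Multiplying over $i$,
\[
0<C_\alpha(x)\le\Bigl(\tfrac{x}{N}\Bigr)^{\frac{c}{2}\sum_i l_i}=\Bigl(\tfrac{x}{N}\Bigr)^{\frac{c}{2}L(\alpha)}\xrightarrow[L(\alpha)\to\infty]{}0,
\]
since $x/N<1$. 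So $C_\alpha(x)\to 0$ as $\alpha\to\infty$.

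The only genuinely delicate point is the endpoint $x=4$, where $\sqrt x=2$ lies exactly on the boundary of the interval $(2,\sqrt N)$ governing Proposition \ref{MajCheb}, so the exponential-in-$l_i$ bound there is not directly available. The clean fix is to observe that for $\sqrt N>2$ fixed, the sequence $\left(\frac{A_t(2)}{A_t(\sqrt N)}\right)_{t\ge 1}$ is strictly decreasing and in particular bounded by some $\delta=\delta(N)<1$ for every $t\ge 1$; hence $0<C_\alpha(4)\le\delta^{k_\alpha}$, and since each $l_i\ge1$ we have $k_\alpha\le L(\alpha)\to\infty$, giving $C_\alpha(4)\to 0$ as well. (Monotonicity of $t\mapsto A_t(2)/A_t(\sqrt N)$ follows by induction from Proposition \ref{ChebRec}, or one simply invokes Proposition \ref{propN=4} to see $A_t(2)/A_t(\sqrt N)=\frac{t+1}{A_t(\sqrt N)}$ and that $A_t(\sqrt N)$ grows geometrically while $t+1$ grows linearly.) This completes the argument; the bulk of the work is purely bookkeeping once Propositions \ref{Image}, \ref{MajCheb}, \ref{propN=4} and Corollary \ref{crlsurj} are in hand.
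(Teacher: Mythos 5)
Your derivation of the product formula (combine Proposition \ref{Image} with Corollary \ref{crlsurj} and evaluate at $x$) and your treatment of the decay for $x\in(4,N)$ (apply Proposition \ref{MajCheb} with ambient interval $(2,\sqrt N)$ to each factor and multiply, getting $C_\alpha(x)\le(x/N)^{\frac{c}{2}L(\alpha)}$) coincide with the paper's argument. Note that the paper's own proof of the decay is stated only for $x\in(4,N)$, which is all that is used in Theorem \ref{N5}; you correctly spotted that the endpoint $x=4$ in the statement is not covered by Proposition \ref{MajCheb}.

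However, your repair at $x=4$ contains a genuine logical error. From the uniform bound $A_t(2)/A_t(\sqrt N)\le\delta<1$ you conclude $0<C_\alpha(4)\le\delta^{k_\alpha}$ and then claim this tends to $0$ because ``$k_\alpha\le L(\alpha)\to\infty$''. The inequality points the wrong way: $L(\alpha)\to\infty$ is perfectly compatible with $k_\alpha$ staying bounded (take $\alpha=a^{2n}$, for which $k_\alpha=1$ while $L(\alpha)=2n\to\infty$), so $\delta^{k_\alpha}$ need not converge to $0$ along $\alpha\to\infty$. The correct fix is exactly the dichotomy the paper uses in Proposition \ref{c04}: given $\epsilon>0$, choose $K$ with $\delta^K<\epsilon$ and $L$ with $A_l(2)/A_l(\sqrt N)<\epsilon$ for all $l\ge L$ (possible since $A_l(2)/A_l(\sqrt N)=(l+1)/A_l(\sqrt N)\to0$, the denominator growing geometrically); then $L(\alpha)\ge LK$ forces either $k_\alpha\ge K$ or some $l_{i_0}\ge L$, and in either case $C_\alpha(4)<\epsilon$ because every factor is at most $1$. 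With that substitution your argument is complete; the main case $x\in(4,N)$ as you wrote it is correct and is the paper's proof.
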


\begin{proof} Let $\alpha=a^{l_1}z^{j_1}\dots a^{l_{k_{\alpha}}}$. We obtain the first assertion using Proposition \ref{Image} and Corollary \ref{crlsurj}:
$$\pi(\chi_{\alpha})(X)=A_{l_1}\dots A_{l_{k_{\alpha}}}(\sqrt{X}),$$
$$d_{\alpha}:=\text{dim}(r_{\alpha})=\prod_{i=1}^{k_{\alpha}}A_{l_i}(\sqrt{N}).$$

By Proposition \ref{MajCheb}, for any fixed $x\in(4,N)$, there exists a constant $0<c<1$ such that $\dfrac{A_l(\sqrt{x})}{A_l(\sqrt{N})}\le\left(\dfrac{\sqrt{x}}{\sqrt{N}}\right)^{c l}$
for all $l\ge1$. Then 
$$C_{\alpha}(x)=\frac{\phi_x(\chi_{\alpha})}{\text{dim}(r_{\alpha})}=\prod_{i=1}^{k_{\alpha}}\frac{A_{l_i}(\sqrt{x})}{A_{l_i}(\sqrt{N})}\le\left(\frac{x}{N}\right)^{\frac{c}{2}\sum_{i}l_i}=\left(\frac{x}{N}\right)^{\frac{c}{2}L(\alpha)}\underset{\alpha\to\infty}\longrightarrow0.$$
\end{proof}
\bigskip

\begin{prp}\label{c04}(Case $N=4$)
Let $\chi_{\alpha}$ be an irreducible character of $C(H_4^{s+})$ associated to the irreducible corepresentation $r_{\alpha}$ with ${\alpha}=a^{l_1}z^{j_1}a^{l_2}\dots a^{l_{k_{\alpha}}}$. Then for all $x\in [0,4]$ 
$$C_{\alpha}(x):=\frac{\phi_x(\chi_{\alpha})}{\emph{dim}(r_{\alpha})}=\frac{\psi_x\circ\pi(\chi_{\alpha})(X)}{\emph{dim}(r_{\alpha})}=\prod_{i=1}^{k_{\alpha}}\frac{A_{l_i}(\sqrt{x})}{A_{l_i}(2)}.$$
Moreover $C_{\alpha}(x)$ converges to $0$ as $\alpha\to\infty$ for all $x\in(0,4)$.
\end{prp}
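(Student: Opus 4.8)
The plan is to mirror the proof of Proposition~\ref{c05}, replacing the role of Proposition~\ref{MajCheb} (which requires $N>2$, i.e.\ $\sqrt N>\sqrt2$, but whose exponential decay estimate is only useful when $x>4$) by the special estimate available at $N=4$, namely Proposition~\ref{propN=4}. First I would record the formula for $C_\alpha(x)$: combining Proposition~\ref{Image} (which gives $\pi(\chi_\alpha)(X)=\prod_{i=1}^{k_\alpha}A_{l_i}(\sqrt X)$) with Corollary~\ref{crlsurj} (which gives $d_\alpha=\mathrm{dim}(r_\alpha)=\prod_{i=1}^{k_\alpha}A_{l_i}(2)$ since $\sqrt N=2$), and with $\phi_x=\psi_x\circ\pi=\mathrm{ev}_x\circ\pi$, one gets immediately
\[
C_\alpha(x)=\frac{\phi_x(\chi_\alpha)}{d_\alpha}=\prod_{i=1}^{k_\alpha}\frac{A_{l_i}(\sqrt x)}{A_{l_i}(2)},
\]
which is the first assertion, valid for all $x\in[0,4]$.

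For the convergence statement, fix $x\in(0,4)$, so that $\sqrt x\in(0,2)$. By Proposition~\ref{propN=4} applied with the argument $\sqrt x\in(0,2)$, there is a constant $D=D(x)<1$ (in fact $0<D<1$) such that $0<\dfrac{A_l(\sqrt x)}{A_l(2)}\le D$ for every integer $l\ge1$. Since every factor in the product defining $C_\alpha(x)$ is such a quotient with $l=l_i\ge1$, and since there are $k_\alpha\ge1$ of them, I would bound
\[
0<C_\alpha(x)=\prod_{i=1}^{k_\alpha}\frac{A_{l_i}(\sqrt x)}{A_{l_i}(2)}\le D^{k_\alpha}.
\]
It then remains to observe that $k_\alpha\to\infty$ as $\alpha\to\infty$ in the sense of Notation~\ref{notac0mon}, and that is where the two cases of that notation's proper function interact: we need to rule out the ``long single block'' scenario where $L(\alpha)=l_1$ is large but $k_\alpha=1$.

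The main obstacle is precisely this last point: the crude bound $D^{k_\alpha}$ does \emph{not} tend to $0$ along elements $\alpha=a^{l_1}$ with $k_\alpha=1$ and $l_1\to\infty$. So I would instead use a sharper consequence of Proposition~\ref{propN=4}: not only is each quotient bounded by $D$, but $\dfrac{A_l(\sqrt x)}{A_l(2)}=\dfrac{1}{l+1}\left|\dfrac{\sin((l+1)\theta)}{\sin\theta}\right|\le\dfrac{1}{(l+1)\sin\theta}$ with $x=2\cos\theta$, hence this quotient is itself small when $l$ is large. Concretely, given $\epsilon>0$ choose $l_\epsilon$ so that $\dfrac{1}{(l+1)\sin\theta}<\epsilon$ for $l\ge l_\epsilon$, and choose also an integer $m$ with $D^m<\epsilon$. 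Now if $L(\alpha)=\sum_i l_i$ is large, either some $l_i\ge l_\epsilon$, in which case $C_\alpha(x)\le D^{k_\alpha-1}\cdot\epsilon\le\epsilon$ (all other factors being $\le D\le 1$), or all $l_i<l_\epsilon$, in which case $k_\alpha> L(\alpha)/l_\epsilon$, so for $L(\alpha)$ large enough $k_\alpha\ge m$ and $C_\alpha(x)\le D^{k_\alpha}\le D^m<\epsilon$. Either way $C_\alpha(x)<\epsilon$ once $L(\alpha)$ exceeds a threshold depending only on $x$ and $\epsilon$, which is exactly the statement that $C_\alpha(x)\to0$ as $\alpha\to\infty$. (Note the pointwise limit $C_\alpha(x)\to0$ claimed at the end of Proposition~\ref{propN=4} along fixed $l$ is not by itself enough here; one really needs the uniform-in-$l$ smallness above, together with the growth of $k_\alpha$, to handle both regimes simultaneously.)
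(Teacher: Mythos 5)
Your proof is correct and follows essentially the same route as the paper's: the same product formula from Proposition \ref{Image} and Corollary \ref{crlsurj}, and the same dichotomy (either $k_\alpha$ is large and $D^{k_\alpha}$ is small, or some $l_{i_0}$ is large and that single factor is already small, the rest being bounded by $1$). Your explicit bound $\frac{1}{(l+1)\sin\theta}$ is just a quantitative restatement of the convergence to $0$ already recorded in Proposition \ref{propN=4}, which is all the paper uses at that step.
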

\begin{proof}
The proof of the first assertion is similar to the one of the previous proposition. For the second assertion, we use Proposition \ref{propN=4}. We recall that we proved in that proposition that there exists a constant $D<1$ such that for all $x\in(0,4)$ and all $l\ge1$ 
\begin{equation}\label{D}
\frac{A_{l}(\sqrt{x})}{A_{l}(2)}\le D.
\end{equation}

Let $\epsilon>0$ and $x\in(0,4)$. We want to prove that, 
\begin{equation}\label{want}
\prod_{i=1}^{k_{\alpha}}\frac{A_{l_i}(\sqrt{x})}{A_{l_i}(2)}<\epsilon\ \text{  for $\alpha$ large enough. }
\end{equation}
By (\ref{D}), there exists a $K>0$ such that 
$
\prod_{i=1}^{k_{\alpha}}\frac{A_{l_i}(\sqrt{x})}{A_{l_i}(2)}<\epsilon
$
for all $\alpha\in S$ with $k_{\alpha}\ge K$. But by Proposition \ref{propN=4} there is also an $L>0$ such that $\dfrac{A_{l}(\sqrt{x})}{A_{l}(2)}<\epsilon$ for all $l\ge L$, since this quotient converges to $0$. 

Now let $\alpha=a^{l_1}z^{j_1}\dots a^{l_{k_{\alpha}}}\in S$, with $L(\alpha)\ge LK$. Then either $k_{\alpha}\ge K$, or there exists $i_0\in\{1,\dots,k_{\alpha}\}$ such that $l_{i_0}\ge L$. In both case we can get (\ref{want}) since $$\prod_{i=1}^{k_{\alpha}}\frac{A_{l_i}(\sqrt{x})}{A_{l_i}(2)}\le\frac{A_{l_{i_0}}(\sqrt{x})}{A_{l_{i_0}}(2)},$$ each factor of the product being less that one.
\end{proof}

Then we can prove the theorem:
\begin{thm}\label{N5}
The dual of $H_N^{s+}$ has the Haagerup property for all $N\ge4$.
\end{thm}
\begin{proof}
We follow the proof in \cite{Bra11} for $O_N^+$. We prove that the dual of $H_N^{s+}$ has the Haagerup approximation property for all $N\ge4$ using both previous propositions. 
Consider the net $\left(T_{\phi_x}\right)_{x\in I_N}$ with $I_N=(4,N) \text{ if } N\ge5,\ I_N=(0,4) \text{ if } N=4$ and $$T_{\phi_x}=\sum_{\alpha\in Irr(H_N^{s+})}\frac{\phi_x(\chi_{\overline{\alpha}})}{d_{\alpha}}p_{\alpha}$$
The $\phi_x$ are states on $C(H_N^{s+})_0$ so, by Theorem \ref{fond}, the $T_{\phi_x}$ are a unital contractions of $L^2(H_N^{s+})$, and their restrictions to $L^{\infty}(H_N^{s+})$ are NUCP $h$-preserving maps. 
Moreover, Proposition \ref{c04} in the case $N=4$ and Proposition \ref{c05} in the cases $N\ge5$, together with the fact that the $p_{\alpha}$ are finite rank operators, show that for each $x\in I_N$, the operator $T_{\phi_x}$ is compact. To conclude one has to show that for all $x\in I_N$,
\begin{equation}\label{asser}
||T_{\phi_x}a-a||_{L_2}\underset{x\to N}{\longrightarrow}0
\end{equation}
for all $a\in L^{\infty}(H_N^{s+})$ (via $a\in L^{\infty}(H_N^{s+})\hookrightarrow L^2(H_N^{s+})$). First let us prove that it is true for any element $a\in Pol(H_N^{s+})$ i.e. any linear combination of matrix coefficients $U_{ij}^{\alpha}$ of irreducible corepresentations of $C(H_N^{s+})$ (by linearity, we can do that only for the elements $U_{ij}^{\alpha}$). Notice that if $\alpha=a^{l_1}z^{j_1}\dots z^{j_{k_{\alpha}-1}}a^{l_{k_{\alpha}}}$ then $\overline{\alpha}=a^{l_{k_{\alpha}}}z^{-j_{k_{\alpha}-1}}\dots z^{-j_{1}}a^{l_{1}}=a^{l_{k_{\alpha}}}z^{s-j_{k_{\alpha}-1}}\dots z^{s-j_{1}}a^{l_{1}}$. Thus by Proposition \ref{Image}
$
\phi_x(\chi_{\overline{\alpha}})=\psi_x\circ\pi(\chi_{\overline{\alpha}})=\psi_x\circ\pi(\chi_{\alpha})=\phi_x(\chi_{\alpha}).$
Hence, 
\begin{align*}
||T_{\phi_x}U_{ij}^{\alpha}-U_{ij}^{\alpha}||_{L^2}&=||U_{ij}^{\alpha}||_{L_2}\left(1-\prod_{i=1}^{k_{\alpha}}\frac{A_{l_i}(\sqrt{x})}{A_{l_i}(\sqrt{N})}\right),
\end{align*}
so let $x\to N$ and the assertion $(\ref{asser})$ holds for all these matrix coefficient. Now by $L^2$-density of $Pol(H_N^{s+})$ and the fact that all $T_{\phi_x}$, $x\in I_N$, are unital contractions (and thus are uniformly bounded), we obtain that $(\ref{asser})$ is true for any $a\in L^2(H_N^{s+})$.
\end{proof}

\begin{rque}
In \cite{Bic04}, it is proved that there is a $*$-Hopf algebras isomorphism between $C(H_2^{s+})$ and $C^*(\Z_s*\Z_s\times \Z_2)$ (see Example $2.5$ and thereafter in that paper). Furthermore, the Haar state on $C^*(\Z_s)*_wC(S_2^+)$ is given by $h=h_1\otimes h_2$ where $h_2$ is the Haar state on $C(S_2^+)$ and $h_1$ is the free product of the Haar states on $C^*(\Z_s)$). Then, it is clear that $H_2^{s+}$ has the Haagerup property by the stability properties of the Haagerup property on groups (see e.g. \cite{CCJ})

The algebra $C(H_3^{s+})$ is more complicated and does not reduce to a more comprehensive tensor product as for the case $N=2$. We are unable at the moment to prove that $H_3^{s+}$ has the Haagerup property.
\end{rque}

\section*{Acknowledgements}
I am very grateful to my advisors Uwe Franz and Roland Vergnioux for the time they spent discussing the arguments of this paper. I would also like to thank Pierre Fima for discussions on various topics on quantum groups, Amaury Freslon for discussions on averaging methods and approximation properties on quantum groups and Mikael de la Salle for very useful suggestions and commentaries on some special cases of the theorem proved in this paper.

\bibliography{HaagAshJ}

\end{document}